\providecommand\@dotsep{5}
\def\listtodoname{List of Todos}
\def\listoftodos{\@starttoc{tdo}\listtodoname}
\numberwithin{equation}{section}
\newtheorem{theorem}{Theorem}[section]
\newtheorem{lemma}[theorem]{Lemma}
\newtheorem{corollary}[theorem]{Corollary}
\newtheorem{claim}[theorem]{Claim}
\newcommand\R{\mathbb R}
\begin{document}

\title[A Lions type result for a large class ...]
{A Lions type result for a large class of Orlicz-Sobolev space and applications}
\author{Claudianor O. Alves$^*$ and Marcos L. M. Carvalho}
\address[Claudianor O. Alves ]
{\newline\indent Unidade Acad\^emica de Matem\'atica
	\newline\indent 
	Universidade Federal de Campina Grande 
\newline\indent
e-mail:coalves@mat.ufcg.edu.br
\newline\indent	
58429-970, Campina Grande - PB, Brazil} 

\address[ Marcos L.M. Carvalho]
{\newline\indent Instituto de Matem\'atica e Estat\'istica 
	\newline\indent 
	Universidade Federal de Goias 
\newline\indent
e-mail: marcos\_leandro\_carvalho@ufg.br
\newline\indent 74001-970, Goi\^ania, GO, Brazil }

\pretolerance10000

%\begin{document}

\begin{abstract}
In this paper we prove a Lions type result for a large class of Orlicz-Sobolev space that can be nonreflexive and use this result to show the existence of solution for a large class of quasilinear problem on a nonreflexive  Orlicz-Sobolev space.
\end{abstract}

\thanks{ C. O. Alves is the corresponding author and he was partially
	supported by  CNPq/Brazil 304804/2017-7 }
\subjclass[2010]{35A15, 35J62, 46E30 } 
\keywords{Orlicz-Sobolev Spaces, Variational Methods, Quasilinear Elliptic Problems, $\Delta_{2}$-condition, Modular}

\maketitle	

\section{Introduction}

In recent years, a special attention has been given for quasilinear problems of the type 
$$
\left\{
\begin{array}{l}
	- \mbox{div}(\phi(|\nabla u|)\nabla u) +  V(x)\phi(|u|)u=f(u), \quad  \mbox{in} \ \mathbb{R}^N, \\
	\mbox{}\\
	u \in W^{1,\Phi}(\mathbb{R}^N)  \,\,\, \mbox{with} \,\,\, N \geq 1,
\end{array}
\right.
\eqno{(P)}
$$
where $V,f$ are continuous functions satisfying some technical  conditions and \linebreak $\phi:(0,+\infty) \to (0,+\infty)$ is a $C^1$-function.  This type of problem appears in a lot of physical applications, such as Nonlinear Elasticity, Plasticity, Generalized Newtonian Fluid, Non-Newtonian Fluid and Plasma Physics. For more details involving this subject see \cite{Db}, \cite{FN2} and their references.

We cite the papers of Bonanno, Bisci and Radulescu \cite{BBR, BBR2}, Cerny \cite{Cerny}, Cl\'ement, Garcia-Huidobro and Man\'asevich \cite{VGMS}, Donaldson \cite{donaldson}, Fuchs and Li \cite{Fuchs1},  Fuchs and Osmolovski \cite{Fuchs2}, Fukagai, Ito and Narukawa \cite{FN}, Gossez \cite{gossez2}, Le and Schmitt \cite{LK}, Mihailescu and Radulescu \cite{MR1, MR2}, Mihailescu and Repovs \cite{MD}, Mihailescu, Radulescu and Repovs \cite{MRR}, Orlicz \cite{O} and their references, where quasilinear problems like $(P)$ have been considered in bounded and unbounded domains of $\mathbb{R}^{N}$. 

For the particular where $\phi(t)=|t|^{p-2}$ with $1<p<N$, problem $(P)$ becomes a $p$-Laplacian problem of the form
$$
\left\{
\begin{array}{l}
	-\Delta_p u +  V(x)|u|^{p-2}u=f(u), \quad  \mbox{in} \ \mathbb{R}^N, \\
	\mbox{}\\
	u \in W^{1,p}(\mathbb{R}^N).
\end{array}
\right.
\eqno{(P_p)}
$$
When $V$ is $\mathbb{Z}^N$ periodic, it is easy to check that the energy functional $J:W^{1,p}(\mathbb{R}^N) \to \mathbb{R}$ associated with $(P_p)$ given by 
$$
J(u)=\frac{1}{p}\int_{\mathbb{R}^N}(|\nabla u|^p+V(x)|u|^p)\, dx -\int_{\mathbb{R}^N}F(u)\,dx
$$
is invariant by $\mathbb{Z}^N$-translation. Have this in mind, in general the main tool used to get a nontrivial critical point to  $J$ is the following result due to Lions \cite{Lions}
\begin{theorem} \label{lions} Let $r>0$ and $1\leq p\leq q <p^*$. If $(u_n) \subset W^{1,p}(\mathbb{R}^N)$ is a bounded sequence and  
$$
\lim_{n \to +\infty}\sup_{y \in \mathbb{R}^N}\int_{B_r(y)}|u_n|^{q}\,dx=0,
$$
then $u_n \to 0$ in $L^{t}(\mathbb{R}^N)$ for all $t \in (p,p^*)$.	
\end{theorem}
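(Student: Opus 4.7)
I will follow the classical Lions strategy: reduce to the case $q=p$, prove convergence for a single well-chosen auxiliary exponent via a covering argument, and then extend to the full range by global interpolation.

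For the reduction, if $q>p$ then H\"older's inequality on a ball gives
$\int_{B_{r}(y)}|u_{n}|^{p}\,dx \le |B_{r}|^{1-p/q}\bigl(\int_{B_{r}(y)}|u_{n}|^{q}\,dx\bigr)^{p/q}$,
so the vanishing hypothesis automatically transfers from $L^{q}$ to $L^{p}$; hence we may assume $q=p$. Moreover, it suffices to show that $\|u_{n}\|_{L^{s}(\R^{N})}\to 0$ for a single auxiliary exponent $s\in(p,p^{*})$: the remaining values of $t$ are then handled by global H\"older interpolation,
\[
\|u_{n}\|_{L^{t}}\le \|u_{n}\|_{L^{p}}^{\theta}\|u_{n}\|_{L^{s}}^{1-\theta}\ \ (t\in(p,s)),\qquad \|u_{n}\|_{L^{t}}\le \|u_{n}\|_{L^{s}}^{\theta}\|u_{n}\|_{L^{p^{*}}}^{1-\theta}\ \ (t\in(s,p^{*})),
\]
together with the uniform boundedness of $\|u_{n}\|_{L^{p}}$ and of $\|u_{n}\|_{L^{p^{*}}}$ (the latter from the Sobolev embedding $W^{1,p}(\R^{N})\hookrightarrow L^{p^{*}}(\R^{N})$).

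For the core step, cover $\R^{N}$ by balls $\{B_{r}(y_{i})\}_{i\in\N}$ of uniformly bounded multiplicity $K=K(N)$. Given $s\in(p,p^{*})$, let $\alpha\in(0,1)$ be defined by $\alpha/p+(1-\alpha)/p^{*}=1/s$, so that $(1-\alpha)s=p^{*}(s-p)/(p^{*}-p)$. Choose $s$ so that $(1-\alpha)s\ge p$, which reduces to the explicit condition $s\ge p(2p^{*}-p)/p^{*}$; this threshold is strictly below $p^{*}$, so valid $s$ exist. On each ball combine the interpolation inequality with the local Sobolev embedding to obtain
\[
\|u_{n}\|_{L^{s}(B_{r}(y_{i}))}^{s}\le C\,\|u_{n}\|_{L^{p}(B_{r}(y_{i}))}^{\alpha s}\,\|u_{n}\|_{W^{1,p}(B_{r}(y_{i}))}^{(1-\alpha)s}.
\]
The first factor is dominated uniformly in $i$ by $\bigl(\sup_{y}\int_{B_{r}(y)}|u_{n}|^{p}\bigr)^{\alpha s/p}$, which tends to zero, so it can be pulled outside the sum. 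Using $(1-\alpha)s\ge p$ and the uniform bound $\|u_{n}\|_{W^{1,p}(\R^{N})}\le M$, I would peel off a power of $M$ from $\|u_{n}\|_{W^{1,p}(B_{r}(y_{i}))}^{(1-\alpha)s}$ and bound $\sum_{i}\|u_{n}\|_{W^{1,p}(B_{r}(y_{i}))}^{p}\le KM^{p}$ by the bounded-overlap property. Summing the resulting estimate over $i$ yields $\|u_{n}\|_{L^{s}(\R^{N})}\to 0$.

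I expect the most delicate point to be precisely the summability constraint $(1-\alpha)s\ge p$, which forces $s$ to lie strictly above the threshold $p(2p^{*}-p)/p^{*}$ rather than anywhere in $(p,p^{*})$. It is exactly this restriction that prevents a single covering argument from sweeping the whole range $(p,p^{*})$ at once, and this is why the final extension to all $t\in(p,p^{*})$ has to be performed \emph{a posteriori} through the global interpolation step described above.
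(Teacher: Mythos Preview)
The paper does not actually prove this statement: Theorem~\ref{lions} is quoted in the introduction as the classical concentration--compactness lemma of Lions, with a citation to \cite{Lions}, and is used only as motivation for the Orlicz--Sobolev generalisations (Theorems~\ref{dolions} and~\ref{Lions1}) that follow. There is therefore no ``paper's own proof'' to compare against.

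That said, your argument is correct and is essentially the standard proof. The reduction from $q$ to $p$ via H\"older on balls is fine; the local interpolation $\|u_n\|_{L^s(B_r(y_i))}\le \|u_n\|_{L^p(B_r(y_i))}^{\alpha}\|u_n\|_{L^{p^*}(B_r(y_i))}^{1-\alpha}$ combined with the local Sobolev embedding is the usual core step; and your observation that summability over $i$ forces $(1-\alpha)s\ge p$, hence $s\ge p(2p^*-p)/p^*$, is exactly right and is handled correctly by the final global interpolation. One very minor remark: when you write $\|u_n\|_{W^{1,p}(B_r(y_i))}^{(1-\alpha)s}\le M^{(1-\alpha)s-p}\|u_n\|_{W^{1,p}(B_r(y_i))}^{p}$ you are implicitly using $\|u_n\|_{W^{1,p}(B_r(y_i))}\le \|u_n\|_{W^{1,p}(\R^N)}\le M$, which is of course true; it is worth stating this explicitly so the peeling step is transparent.
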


The lemma above is used of the following way: If $(u_n)$ is a $(PS)_c$ sequence  for $J$  with $c>0$, assuming good conditions on $f$, it is possible to prove that Theorem \ref{lions} does not hold, and so, there must be $(y_n) \subset \mathbb{Z}^N$ and $\beta>0$ such that 
$$
\int_{B_r(y_n)}|u_n|^q\,dx\geq \beta, \quad \forall n \in \mathbb{N}. 
$$ 
After that, we consider the sequence $v_n(x)=u_n(x+y_n)$ that has a nontrivial weak limit, which is in general a nontrivial critical point to $J$. For more details involving this type of argument we cite Alves, do \'O and Miyagaki \cite{ADOM} and references therein.  

If the function $\phi$ is a general function such that $\Phi(t)=\int_{0}^{|t|}\phi(s)s\,dx$ is a N-function, problem $(P)$ becomes a quasilinear  problem  of the form
$$
\left\{
\begin{array}{l}
	-\Delta_\Phi u +  V(x)\phi(|u|)u=f(u), \quad  \mbox{in} \ \mathbb{R}^N, \\
	\mbox{}\\
	u \in W^{1,\Phi}(\mathbb{R}^N),
\end{array}
\right.
\eqno{(P)}
$$
where $\Delta_\Phi u=div(\phi(|\nabla u|)\nabla u)$ is the $\Phi$-Laplacian operator. If $V$ is $\mathbb{Z}^N$ periodic, it is also simple to prove that the energy functional $I:W^{1,\Phi}(\mathbb{R}^N) \to \mathbb{R}$ associated with $(P)$, given by 
$$
I(u)=\int_{\mathbb{R}^N}(\Phi(|\nabla u|)+V(x)\Phi(|u|))\, dx -\int_{\mathbb{R}^N}F(u)\,dx
$$
is invariant by $\mathbb{Z}^N$-translation. In order to repeat the same approach explored for the $p$-Laplacian problem, it was necessary to establish a version of Theorem \ref{lions} for the Orcicz-Sobolev spaces $W^{1,\Phi}(\mathbb{R}^N)$, which was proved by Alves, Figueiredo and Santos \cite{AGJ}, and it has the following statement 
\begin{theorem}\label{dolions} ( {\bf A Lions type result for Orlicz-Sobolev spaces} ) \linebreak Assume that $\phi$  satisfies the following conditions: 
	
	\noindent $i)$ The function $\phi(t)t$ is increasing in $(0,+\infty)$, that is,
	$$
	(\phi(t)t)'>0 \,\,\, \forall t>0.
	\eqno{(i)}
	$$
	
	\noindent $ii)$ There exist $l, m \in (1,N)$ such that
	$$
	l\leq \displaystyle\frac{\phi(|t|)t^{2}}{\Phi(t)}\leq m \,\,\, \forall t \not= 0,
	\eqno{(ii)}
	$$
	where $l\leq m < l^{*}$, $l^{*}=\displaystyle\frac{lN}{N-l}$ and $\Phi(t)=\int_{0}^{|t|}\phi(s)s\,dx$. If $(u_{n})\subset W^{1,\Phi}(\mathbb{R}^N)$ is a bounded sequence  such that there exists $R>0$ satisfying
	$$
	\displaystyle \lim_{n\rightarrow +\infty}\displaystyle \sup_{y\in{\mathbb{R}^N}}\int_{B_{R}(y)}\Phi(|u_{n}|)=0, 
	$$
	then for any N-function $B$ verifying $\Delta_{2}$-condition with
	$$
	\displaystyle\lim_{t\rightarrow 0}\displaystyle\frac{B(t)}{\Phi(t)}=0 \eqno{(B_1)}
	$$
	and
	$$
	\displaystyle\lim_{|t|\rightarrow +\infty}\displaystyle\frac{B(t)}{\Phi_{*}(t)}=0,  \eqno{(B_2)}
	$$
	we have
	$$
	u_{n}\rightarrow 0 \ \ \mbox{in} \ \ L^{B}(\mathbb{R}^{N}).
	$$
\end{theorem}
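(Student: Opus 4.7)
My plan is to adapt the classical Lions lemma (Theorem \ref{lions}) to the Orlicz setting: cover $\mathbb{R}^N$ by balls of radius $R$ with bounded overlap, interpolate on each ball between $L^{\Phi}$ (controlled by the hypothesis) and $L^{\Phi_*}$ (controlled by Sobolev), and sandwich $B$ between these using $(B_1)$--$(B_2)$. As preliminaries, boundedness of $(u_n)$ in $W^{1,\Phi}(\mathbb{R}^N)$ combined with the $\Delta_2$-property of $\Phi$ implicit in $(ii)$ yields uniform bounds on $\int\Phi(u_n)$ and $\int\Phi(|\nabla u_n|)$; the inequality $m<l^*$ gives the continuous Sobolev--Orlicz embedding $W^{1,\Phi}(\mathbb{R}^N)\hookrightarrow L^{\Phi_*}(\mathbb{R}^N)$, so $\int\Phi_*(u_n)$ is uniformly bounded as well. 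Cover $\mathbb{R}^N$ by balls $\{B_R(y_i)\}$ with bounded overlap $N_R$, and write $\sigma_n^{(y)}:=\int_{B_R(y)}\Phi(u_n)$, so that $\sigma_n:=\sup_y\sigma_n^{(y)}\to 0$ by hypothesis.

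For the local estimate, fix $\varepsilon>0$. Condition $(B_1)$ supplies $\delta>0$ with $B(t)\leq\varepsilon\Phi(t)$ for $|t|\leq\delta$; $(B_2)$ supplies $M>\delta$ with $B(t)\leq\varepsilon\Phi_*(t)$ for $|t|\geq M$; and on the intermediate range $B(t)\leq B(M)$. Splitting $B_R(y)$ by the level sets of $|u_n|$ and using the Chebyshev-type bound $|\{|u_n|\geq\delta\}\cap B_R(y)|\leq\Phi(\delta)^{-1}\sigma_n^{(y)}$, one gets the local estimate
$$\int_{B_R(y)}B(u_n) \;\leq\; \varepsilon\int_{B_R(y)}[\Phi(u_n)+\Phi_*(u_n)] \;+\; \frac{B(M)}{\Phi(\delta)}\,\sigma_n^{(y)}.$$
Summing over $i$ and using bounded overlap,
$$\int_{\mathbb{R}^N}B(u_n) \;\leq\; \varepsilon N_R\big(\|\Phi(u_n)\|_1+\|\Phi_*(u_n)\|_1\big) \;+\; \frac{B(M)\,N_R}{\Phi(\delta)}\,\|\Phi(u_n)\|_1.$$

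The first term is $O(\varepsilon)$ uniformly in $n$, but the second is only $O(1)$: the crude summation loses the supremum smallness $\sigma_n\to 0$. Propagating $\sigma_n\to 0$ through the summation is the main obstacle and the technical heart of the proof. I will address it by upgrading the per-ball estimate to one of the form
$$\int_{B_R(y)}B(u_n) \;\leq\; \psi\!\left(\sigma_n^{(y)}\right)\!\left[\int_{B_R(y)}\Phi(u_n)+\int_{B_R(y)}\Phi(|\nabla u_n|)\right],$$
with $\psi(s)\to 0$ as $s\to 0^+$ independent of $y$. This can be obtained either from the compact Rellich--Kondrachov embedding $W^{1,\Phi}(B_R(y))\hookrightarrow L^B(B_R(y))$ (valid under $(B_2)$ and $\Delta_2$) combined with translation invariance and a scaling argument using the $\Delta_2$ bounds implicit in $(ii)$, or from an Orlicz--Gagliardo--Nirenberg-type interpolation $\|u\|_{L^B}\leq C\|u\|_{L^\Phi}^{1-\theta}\|u\|_{L^{\Phi_*}}^{\theta}$ combined with a H\"older step on the sum. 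Once such a $\psi$ is in hand, summation yields $\int_{\mathbb{R}^N}B(u_n)\leq \psi(\sigma_n)\,N_R\cdot\mathrm{const}\to 0$, completing the proof.
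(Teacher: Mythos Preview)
First, a framing remark: the paper does \emph{not} prove Theorem~\ref{dolions}. It is quoted from Alves--Figueiredo--Santos \cite{AGJ} as background, and the paper's own contribution is the different (and much shorter) Theorem~\ref{Lions1}, whose hypothesis replaces the vanishing-sup condition by the measure condition $mes([|w_n|>\epsilon])\to 0$. So there is no ``paper's own proof'' of this statement to compare against; I can only assess your argument on its merits.

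Your outline follows the classical Lions template correctly up to the point you yourself flag: the local three-zone estimate
\[
\int_{B_R(y)}B(u_n)\;\le\;\varepsilon\!\int_{B_R(y)}\!\big[\Phi(u_n)+\Phi_*(u_n)\big]\;+\;\frac{B(M)}{\Phi(\delta)}\,\sigma_n^{(y)}
\]
is fine, and you are right that crude summation of the last term only yields $O(1)$. The gap is that you do not actually close this. You assert that the upgraded per-ball inequality
\[
\int_{B_R(y)}B(u_n)\;\le\;\psi\big(\sigma_n^{(y)}\big)\Big[\int_{B_R(y)}\Phi(u_n)+\int_{B_R(y)}\Phi(|\nabla u_n|)\Big]
\]
``can be obtained'' from either Rellich--Kondrachov plus an unspecified scaling argument, or from an Orlicz Gagliardo--Nirenberg interpolation $\|u\|_{L^B}\le C\|u\|_{L^\Phi}^{1-\theta}\|u\|_{L^{\Phi_*}}^{\theta}$. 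Neither route is carried out, and neither is automatic. Compactness of the embedding $W^{1,\Phi}(B_R)\hookrightarrow L^B(B_R)$ is qualitative and does not by itself produce a modulus $\psi$ uniform in $y$ and in the sequence; some genuine quantitative input is still missing. As for the interpolation inequality, a norm inequality of the shape $\|u\|_{L^B}\le C\|u\|_{L^\Phi}^{1-\theta}\|u\|_{L^{\Phi_*}}^{\theta}$ is \emph{not} available for a general $N$-function $B$ satisfying only $(B_1)$--$(B_2)$: unlike the $L^p$ scale, Orlicz spaces do not interpolate by a simple H\"older argument, and even when an interpolation holds one must still pass from norms to modulars and check that the resulting exponents make the sum over balls converge. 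In short, you have correctly located the crux of the proof and then stepped over it. What is actually needed (and what \cite{AGJ} supplies) is a concrete local inequality relating the $B$-modular on a ball to a product of a power of $\int_{B_R}\Phi(|u|)$ and the $W^{1,\Phi}$-energy on that ball, with exponents chosen so that the energy factor is summable; this uses $(ii)$ in an essential way, not just the limit conditions $(B_1)$--$(B_2)$. Until that estimate is written down and the exponents are checked, the argument is incomplete.
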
	
In the lemma above, $\Phi_{*}$ denotes the Sobolev conjugate function of $\Phi$ defined by
$$
\Phi^{-1}_{*}(t)=\displaystyle\int^{t}_{0}\displaystyle\frac{\Phi^{-1}(s)}{s^{(N+1)/N}} ds \,\,\, \mbox{for} \,\,\, t >0,
$$
when
$$
\displaystyle\int^{+\infty}_{1}\displaystyle\frac{\Phi^{-1}(s)}{s^{(N+1)/N}} ds=+\infty.
$$

In \cite{AGJ}, the authors used Theorem \ref{dolions} to establish the existence of nontrivial solution for $(P)$ supposing that $V$ is $\mathbb{Z}^N$-periodic and $f$ satisfying some technical conditions.  The approach explored follows the same lines as in the $p$-Laplacian case, that is, if $(u_n)$ is a $(PS)_c$ sequence  for $I$  with $c>0$, with good conditions on $f$, it is possible  to find $(y_n) \subset \mathbb{Z}^N$ and $\beta>0$ such that 
$$
\int_{B_r(y_n)}\Phi(|u_n|)\,dx\geq \beta, \quad \forall n \in \mathbb{N}. 
$$ 
After that, the authors consider the sequence $v_n(x)=u_n(x+y_n)$ that has a nontrivial weak limit, which is a nontrivial critical point for $I$.  

Here, it is very important to say that items $(i)-(ii)$ in Theorem \ref{dolions} ensure that $\Phi$ and $\tilde{\Phi}$ satisfy the $\Delta_2$-condition, and so, the space $W^{1,\Phi}(\R^N)$ is reflexive. 

Motivated by the papers cited above, we are led to try to answer the following question: How can we find a solution to problem $(P)$ when $W^{1,\Phi}(\R^N)$ is nonreflexive?  In the present paper we intend to answer this question. The first difficulty is associated with the fact that we cannot use Theorem \ref{dolions}, because it works well only in reflexive spaces. Here, we prove the following Lions type result that can be used when $W^{1,\Phi}(\R^N)$ is nonreflexive. 
\begin{theorem} \label{Lions1}
	(A Lions type result) Let $\Phi$ and $B$ be N-functions such that there exists $\Phi_*$ and 
	$$
	\lim_{t\to 0}\frac{B(|t|)}{\Phi(|t|)}=\lim_{t\to\infty}\frac{B(|t|)}{\Phi_*(|t|)}=0.
	$$
	If $(w_n) \subset W^{1,\Phi}(\mathbb{R}^N)$ is a sequence such that $(\int_{\mathbb{R}^N}\Phi(|w_n|)dx)$ and $(\int_{\mathbb{R}^N}\Phi_*(|w_n|)dx)$ are bounded, and for each $\epsilon>0$ we have
	$$
	mes([|w_n|>\epsilon]) \to 0, \quad \mbox{as} \quad n \to +\infty, \leqno{(*)}
	$$
	then
	$$
	\int_{\mathbb{R}^N}B(|w_n|)\,dx \to 0 \quad \mbox{as} \quad n \to +\infty.
	$$
\end{theorem}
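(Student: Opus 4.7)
The plan is to decompose the integration domain into three pieces according to the pointwise size of $|w_n|$, each piece being handled by exactly one of the three hypotheses. Fix parameters $0<\epsilon<M$ to be chosen later and write
\begin{equation*}
\int_{\mathbb{R}^N}B(|w_n|)\,dx = I_n^1+I_n^2+I_n^3,
\end{equation*}
where $I_n^1$, $I_n^2$, $I_n^3$ denote the integrals over $\{|w_n|\leq\epsilon\}$, $\{\epsilon<|w_n|\leq M\}$ and $\{|w_n|>M\}$, respectively.

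Given $\eta>0$, I would first exploit the small-argument limit: by $\lim_{t\to 0}B(|t|)/\Phi(|t|)=0$ there exists $\epsilon>0$ such that $B(|s|)\leq \eta\,\Phi(|s|)$ whenever $|s|\leq\epsilon$, giving
\begin{equation*}
I_n^1\leq \eta\int_{\mathbb{R}^N}\Phi(|w_n|)\,dx\leq C\eta,
\end{equation*}
where $C$ is a uniform bound on the two modulars $\int_{\mathbb{R}^N}\Phi(|w_n|)\,dx$ and $\int_{\mathbb{R}^N}\Phi_*(|w_n|)\,dx$. Similarly, the large-argument limit $\lim_{t\to\infty}B(|t|)/\Phi_*(|t|)=0$ furnishes $M>\epsilon$ such that $B(|s|)\leq\eta\,\Phi_*(|s|)$ for $|s|>M$, hence $I_n^3\leq C\eta$. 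Both estimates are uniform in $n$.

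For the middle piece, $\epsilon$ and $M$ are now fixed independently of $n$, and on $\{\epsilon<|w_n|\leq M\}$ the monotonicity of $B$ yields $B(|w_n|)\leq B(M)$, so
\begin{equation*}
I_n^2\leq B(M)\,\mathrm{mes}\bigl(\{\epsilon<|w_n|\leq M\}\bigr)\leq B(M)\,\mathrm{mes}([|w_n|>\epsilon]),
\end{equation*}
which tends to $0$ as $n\to+\infty$ by hypothesis $(*)$. Combining the three estimates yields $\limsup_{n}\int_{\mathbb{R}^N}B(|w_n|)\,dx\leq 2C\eta$, and letting $\eta\to 0^+$ finishes the proof. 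I do not anticipate a real obstacle here, since the argument is a textbook three-regions decomposition; the only delicate point is the order of quantifiers, namely that $\epsilon$ and $M$ must be chosen depending only on $\eta$ and the N-functions $\Phi,B,\Phi_*$ (not on $n$), so that $B(M)$ is a finite constant at the moment $\mathrm{mes}([|w_n|>\epsilon])\to 0$ is invoked.
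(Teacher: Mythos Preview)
Your proof is correct and follows essentially the same three-regions decomposition as the paper: the paper chooses thresholds $\epsilon$ and $T$ (your $M$) from the two limit hypotheses, bounds the small- and large-value pieces by $\tau/3$ each via the modular bounds, and controls the middle piece by $B(T)\,\mathrm{mes}([|w_n|>\epsilon])\to 0$. The only cosmetic difference is that the paper absorbs the modular bounds into the choice of $\epsilon$ and $T$ up front, whereas you carry the constant $C$ to the end; the logic and order of quantifiers are identical.
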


Note that we can apply Theorem  \ref{Lions1} in Orlicz-Sobolev spaces $W^{1,\Phi}(\R^N)$ that can be nonreflexive, because in that result $\Phi$ or $\tilde{\Phi}$ do not need to satisfy the $\Delta_2$-condition, which is a crucial property to guarantee the reflexivity of $W^{1,\Phi}(\mathbb{R}^N)$. Hence, Theorem \ref{Lions1} improves Theorem \ref{dolions}.

Our next result is a Lieb type result for Orlicz-Sobolev space $W^{1,\Phi}(\R^N)$ that works together with Theorem \ref{Lions1} to get a sequence whose the weak limit is nontrivial. 

\begin{theorem}(A Lieb type result) \label{lieb} Let $\Phi \in C^{1}(\mathbb{R},[0,+\infty))$ be a $N$-function and $(u_n) \subset W^{1,\Phi}(\mathbb{R}^N)$ such that $\displaystyle \int_{\mathbb{R}^N}\Phi(|\nabla u_n|)\,dx \leq M$. If there are $\epsilon,\delta>0$ such that
	$$
	mes([|u_n|>\epsilon])\geq \delta, \quad \forall n \in \mathbb{N}, 
	$$	
	then there is $(z_n) \subset \mathbb{Z}^N$ such that $v_n(x)=u_n(x+z_n)$ has a subsequence whose its limit in $L_{loc}^{\Phi}(\R^N)$ is non trivial.  
\end{theorem}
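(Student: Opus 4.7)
The plan is a truncation plus Lieb-type combinatorial argument plus Rellich--Kondrachov compactness on unit cubes. First, I would truncate each $u_n$ at height $2\epsilon$: set $\tilde u_n(x)=\operatorname{sgn}(u_n(x))\min(|u_n(x)|,2\epsilon)$. Then $\|\tilde u_n\|_{L^\infty}\le 2\epsilon$, $|\nabla\tilde u_n|\le|\nabla u_n|$ a.e., so $\int_{\R^N}\Phi(|\nabla\tilde u_n|)\,dx\le M$, and $\{|\tilde u_n|>\epsilon\}=\{|u_n|>\epsilon\}$, preserving the measure lower bound. Covering $\R^N$ by the unit cubes $Q_z=z+[0,1]^N$, $z\in\mathbb{Z}^N$, convexity of the N-function $\Phi$ and Jensen's inequality yield
\[
\Phi\Bigl(\int_{Q_z}|\nabla\tilde u_n|\,dx\Bigr)\le\int_{Q_z}\Phi(|\nabla\tilde u_n|)\,dx\le M,
\]
so $\int_{Q_z}|\nabla\tilde u_n|\,dx\le\Phi^{-1}(M)=:K$ uniformly in $n$ and $z$. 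Thus $(\tilde u_n)$ is uniformly bounded in $W^{1,1}(Q_z)\cap L^\infty(Q_z)$ for every $z$.

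The core step is a Lieb-type concentration estimate: there exist $c=c(\epsilon,\delta,M,\Phi,N)>0$, $\epsilon'\in(0,\epsilon]$, and $z_n\in\mathbb{Z}^N$ with $\operatorname{mes}(\{|\tilde u_n|>\epsilon'\}\cap Q_{z_n})\ge c$ for every $n$. Writing $a_n^z=\operatorname{mes}(\{|\tilde u_n|>\epsilon\}\cap Q_z)$, we have $\sum_z a_n^z\ge\delta$. I would dichotomize on the cube means $\bar w_n^z=|Q_z|^{-1}\int_{Q_z}|\tilde u_n|\,dx$: if some $\bar w_n^z\ge\epsilon/2$, a Chebyshev estimate using $|\tilde u_n|\le 2\epsilon$ forces $\operatorname{mes}(\{|\tilde u_n|>\epsilon/4\}\cap Q_z)\ge 1/7$, so one may take $\epsilon'=\epsilon/4$. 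Otherwise $\bar w_n^z<\epsilon/2$ for all $z$, and on $\{|\tilde u_n|>\epsilon\}\cap Q_z$ one has $\bigl||\tilde u_n|-\bar w_n^z\bigr|>\epsilon/2$; the $W^{1,1}$ Sobolev--Poincar\'e inequality $\bigl\||\tilde u_n|-\bar w_n^z\bigr\|_{L^{N/(N-1)}(Q_z)}\le C\|\nabla\tilde u_n\|_{L^1(Q_z)}$ combined with Jensen gives
\[
a_n^z\le C(\epsilon)\|\nabla\tilde u_n\|_{L^1(Q_z)}^{N/(N-1)}\le C(\epsilon)\Phi^{-1}(\eta_n^z)^{N/(N-1)},
\]
where $\eta_n^z=\int_{Q_z}\Phi(|\nabla\tilde u_n|)\,dx$. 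Summing via the elementary inequality $\sum_z t_z^{N/(N-1)}\le(\sup_z t_z)^{1/(N-1)}\sum_z t_z$, together with $\sum_z\eta_n^z\le M$ and concavity of $\Phi^{-1}$, produces $\delta\le C(\epsilon,M)(\sup_z a_n^z)^{\theta}$ for some $\theta>0$, contradicting $\sup_z a_n^z\to 0$ and forcing the uniform lower bound $\sup_z a_n^z\ge c$.

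Having $z_n$ in hand, set $v_n(x)=\tilde u_n(x+z_n)$. On every unit cube $(v_n)$ is uniformly bounded in $L^\infty\cap W^{1,1}$, so Rellich--Kondrachov $W^{1,1}\hookrightarrow\hookrightarrow L^1$ and a diagonal extraction yield a subsequence converging to some $v$ in $L^1_{loc}(\R^N)$; the $L^\infty$ bound and dominated convergence upgrade this to convergence in $L^\Phi_{loc}(\R^N)$. The concentration bound passes to the limit via convergence in measure, giving $\operatorname{mes}(\{|v|\ge\epsilon'\}\cap Q_0)\ge c>0$, so $v\not\equiv 0$. Since $|u_n(\cdot+z_n)|\ge|v_n|$ pointwise a.e., any $L^\Phi_{loc}$-cluster value of $(u_n(\cdot+z_n))$ majorizes $|v|$ in absolute value on $Q_0$ and is therefore nontrivial.

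\textbf{Main obstacle.} The heart of the proof is the Lieb concentration step. The classical $W^{1,p}$ argument exploits the $L^{p^*}$-summability of $|u|^{p^*}$ over cubes, which in turn rests on an $L^p$ bound for $u$ itself. Here only the Orlicz modular $\int\Phi(|\nabla\tilde u_n|)\le M$ is controlled, with no integral bound on $\tilde u_n$ beyond the $L^\infty$ truncation. The cube-by-cube Jensen reduction to a uniform $W^{1,1}$-bound is therefore essential, and the final interpolation--summation step must be performed respecting the Orlicz summability $\sum_z\eta_n^z\le M$ rather than a naive (and generally false) $L^1$-summability of $|\nabla\tilde u_n|$.
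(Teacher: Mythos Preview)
Your concentration step in Case 2 has a genuine gap. From $a_n^z \le C(\epsilon)\,\Phi^{-1}(\eta_n^z)^{N/(N-1)}$ and the interpolation $\sum_z t_z^{N/(N-1)} \le (\sup_z t_z)^{1/(N-1)}\sum_z t_z$ with $t_z=\Phi^{-1}(\eta_n^z)$, you still need a uniform bound on $\sum_z \Phi^{-1}(\eta_n^z)$ in terms of $\sum_z \eta_n^z \le M$. Concavity of $\Phi^{-1}$ points the wrong way: since $\Phi^{-1}(0)=0$, the ratio $\Phi^{-1}(s)/s$ is non-increasing, hence $\Phi^{-1}(\eta_n^z)\ge (\eta_n^z/M)\Phi^{-1}(M)$, a lower bound rather than an upper bound. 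In fact the conclusion $\delta \le C(\epsilon,M)(\sup_z a_n^z)^\theta$ is false at the level of the inequalities you have assembled. Take $\Phi(t)=t^2$ and $N=3$: set $a_n^z=\delta/k$ on $k$ cubes and $\eta_n^z=(\delta/(Ck))^{4/3}$ on those cubes. Then $a_n^z = C\,\Phi^{-1}(\eta_n^z)^{3/2}$ holds with equality, $\sum_z\eta_n^z = (\delta/C)^{4/3}k^{-1/3}\to 0 \le M$, and $\sum_z a_n^z=\delta$, yet $\sup_z a_n^z = \delta/k\to 0$. So no contradiction arises, and the argument does not force concentration. You correctly flag this as the main obstacle in your closing paragraph, but the fix you propose (concavity of $\Phi^{-1}$) does not close it.

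The paper avoids summation over cubes altogether. It first proves a single-cube lemma: for any $u$ with $\int\Phi(|\nabla u|)\le M$, a pigeonhole argument produces one cube $K(y_0)$ on which $\int_{K(y_0)}\Phi(|\nabla u|) < \bigl(1+M(\int_{\R^N}\Phi(|u/2|))^{-1}\bigr)\int_{K(y_0)}\Phi(|u/2|)$; indeed, if this failed for every cube, integrating would give $M>M$. On that cube one checks $\Phi(|u/2|)\in W^{1,1}(K(y_0))$ via the identity $\widetilde\Phi(\Phi'(t))\le\Phi(2t)$, and the Sobolev embedding $W^{1,1}\hookrightarrow L^{N/(N-1)}$ is applied to $\Phi(|u/2|)$ itself rather than to $u$. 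A final H\"older step then yields a lower bound on $\mathrm{mes}\bigl(K(y_0)\cap\mathrm{supp}\,u\bigr)$ depending only on $M$ and $\int_{\R^N}\Phi(|u/2|)$. Applying this lemma once per $n$ to $(|u_n|-\epsilon/2)^+$ (whose $\Phi$-integral is bounded below by $\Phi(\epsilon/4)\delta$) gives $\mathrm{mes}\bigl(K(y_n)\cap[|u_n|\ge\epsilon/2]\bigr)\ge C_2>0$ directly. Your truncation from above and the cube-wise Jensen reduction to $W^{1,1}$ are sound ideas, but the global summation strategy that works in $W^{1,p}$ does not survive the passage to a general N-function; the paper's per-function pigeonhole, combined with Sobolev applied to $\Phi(|u/2|)$, is what makes the Orlicz case go through.
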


In order to illustrate how we can use Theorems \ref{Lions1} and \ref{lieb}, we will study the existence of nontrivial solution to problem  $(P)$ assuming that $\phi:(0, +\infty) \to (0, +\infty) $ is a continuous function verifying the following conditions:
$$
t \mapsto t\phi(t) \quad \mbox{is increasing for} \quad t>0. \leqno{(\phi_1)}
$$
$$
\lim_{t \to 0}t\phi(t)=0 \quad \mbox{and} \quad \lim_{t \to +\infty}t\phi(t)=+\infty. \leqno{(\phi_2)}
$$
There are $l, m \in [1,N), \, l \leq m \leq l^*$, and 
$$
l \leq \frac {t^2\phi(t)}{\Phi(t)}\leq m,~t>0.  \leqno{(\phi_3)}
$$

Next, we show some examples of functions $\Phi$ that can be considered in the present paper. If $l>1$, we can consider    
$$
\begin{array}{l}
	i) \,\, \Phi(t)=|t|^{p} \,\,\, \mbox{for} \,\,\, 1<p<N. \\
	\mbox{}\\
	ii) \,\, \Phi(t)=|t|^{p}+|t|^{q} \,\,\, \mbox{for} \,\,\, 1<p<q<N \,\,\, \mbox{and} \,\,\, q \in (p,p^{*}) \,\,\, \mbox{with} \,\,\, \displaystyle p^{*}=\frac{Np}{N-p}.  \\
	\mbox{}\\
	iii) \,\, \Phi(t)=(1+|t|^2)^{\gamma}-1 \,\,\, \mbox{for} \,\,\, \gamma \in (1, \frac{N}{N-2} ). \\
	\mbox{}\\
	iv) \,\,  \Phi(t)=|t|^{p}ln(1+|t|) \,\,\, \mbox{for} \,\,\, 1<p_0<p<N-1 \,\,\, \mbox{with} \,\,\, \displaystyle p_0=\frac{-1+\sqrt{1+4N}}{2}.
\end{array}
$$\\
For the case $l=1$, a typical example is 
$$
\Phi(t)=|t|\log(1+|t|), \quad \forall t \in \mathbb{R}.
$$
Here we would like to point out that if $1<l\leq m<+\infty$, then function $\Phi$ and its complementary function $\widetilde{\Phi}$ given by 
\[
\widetilde{\Phi}(s) = \max_{t\geq 0}\{ st - \Phi(t)\}, \quad  \mbox{for} \quad s\geq0,
\]
satisfy the $\Delta_{2}$-condition. It is well known in the literature that $W^{1,\Phi}(\mathbb{R}^N)$ is reflexive when $\Phi$ and $\widetilde{\Phi}$ satisfy the $\Delta_{2}$-condition. Thus, in our paper the space $W^{1,\Phi}(\mathbb{R}^N)$ can be nonreflexive, because we are also considering the case $l=1$.

In what follows, the continuous function $f:\mathbb{R} \to \mathbb{R}$ satisfies the following assumptions:

$$
\lim_{t \to 0}\frac{f(t)}{t\phi(t)}=0. \leqno{(f_1)}
$$
$$
\limsup_{t \to +\infty}\frac{|f(t)|}{|t|b(|t|)}<+\infty \leqno{(f_2)}
$$
where $b:(0,+\infty) \to \mathbb{R}$ is continuous function satisfying: \\
$$
0<b_1\leq \frac {t^2b(t)}{B(t)}\leq b_2,~t>0,  \leqno{(b_1)}
$$
where $m<b_1<b_2<l^*$ and $B(t)=\int_{0}^{|t|}sb(s)\,ds$ for all $ t\in \mathbb{R}$ is a N-function. 

There is $\theta>m$ such that 
$$
0<\theta F(t) \leq f(t)t, \quad \forall t \in \mathbb{R} \setminus \{0\}. \leqno{(f_3)}
$$

Finally, related to the potential $V:\mathbb{R}^{N}\rightarrow \mathbb{R}$, we assume that it is a continuous $\mathbb{Z}^N$-perifunction verifying  
$$
0<V_0 =\inf_{x \in \mathbb{R}^{N}}V(x). \eqno{(V_1)}
$$

In what follows, we say that $u \in W^{1,\Phi}(\mathbb{R}^N)$ is a weak solution to $(P)$ whenever
$$
\int_{\mathbb{R}^N}\phi(|\nabla u|)\nabla u \nabla v\,dx+\int_{\mathbb{R}^N}V(x)\phi(|u|)u v\,dx=\int_{\mathbb{R}^N}f(u)v\,dx, \quad \forall v \in W^{1,\Phi}(\mathbb{R}^N).
$$

Under these conditions our main result involving the existence of nontrivial solution to $(P)$ is the following:

\begin{theorem} \label{T1} Assume $(\phi_1) - (\phi_3), (f_1),(f_2),(V_1)$ and $(b_1)$. Then, problem $(P)$ has a nontrivial solution.
\end{theorem}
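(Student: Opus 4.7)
The plan is to obtain a nontrivial solution of $(P)$ as a critical point of
$$
I(u)=\int_{\mathbb{R}^N}\bigl(\Phi(|\nabla u|)+V(x)\Phi(|u|)\bigr)\,dx-\int_{\mathbb{R}^N}F(u)\,dx
$$
via a mountain pass argument on $W^{1,\Phi}(\mathbb{R}^N)$ combined with the $\mathbb{Z}^N$-invariance of $I$. First I would check that the hypotheses $(\phi_1)$--$(\phi_3)$, $(f_1)$, $(f_2)$, $(b_1)$ make $I$ well defined and of class $C^1$ (the embedding $W^{1,\Phi}(\mathbb{R}^N)\hookrightarrow L^{B}(\mathbb{R}^N)$ controls the nonlinear term because $b_1,b_2\in(m,l^*)$). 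The mountain pass geometry follows in a standard way: near the origin, $(f_1)$ together with the inequality $|f(t)t|\leq \eta\, t\phi(t)|t|+C_\eta\, tb(t)|t|$ (obtained from $(f_1),(f_2),(b_1)$) and $(\phi_3)$ give $I(u)\geq c_1 \|u\|^l$ for $\|u\|$ small in the Luxemburg norm, while $(f_3)$ yields $F(t)\geq c_2|t|^\theta$ with $\theta>m$, so $I(tv)\to-\infty$ as $t\to+\infty$ for any nonnegative $v\in C_c^\infty(\mathbb{R}^N)\setminus\{0\}$.

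Next I would produce a Cerami sequence $(u_n)$ at the mountain pass level $c>0$. Combining $\theta I(u_n)-I'(u_n)u_n=o(1)(1+\|u_n\|)$ with $(f_3)$ and the left inequality of $(\phi_3)$, one bounds the modular
$$
\int_{\mathbb{R}^N}\Phi(|\nabla u_n|)\,dx+\int_{\mathbb{R}^N}V(x)\Phi(|u_n|)\,dx,
$$
which, via $(V_1)$ and the Luxemburg norm estimates, gives a uniform bound on $\|u_n\|_{W^{1,\Phi}}$; the Orlicz--Sobolev embedding $W^{1,\Phi}(\mathbb{R}^N)\hookrightarrow L^{\Phi_*}(\mathbb{R}^N)$ then controls $\int\Phi_*(|u_n|)\,dx$ as well. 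So both modulars required to invoke Theorem \ref{Lions1} are bounded.

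Now comes the decisive step, a nonvanishing alternative. I would argue by contradiction: if for every $\epsilon>0$ we had $\mathrm{mes}([|u_n|>\epsilon])\to 0$, then by Theorem \ref{Lions1} applied with $B$ as in $(f_2)$ we would obtain $\int B(|u_n|)\,dx\to 0$. Combining this with $(f_1),(f_2),(b_1)$ and a splitting $[|u_n|\leq \delta]\cup[|u_n|>\delta]$ yields $\int f(u_n)u_n\,dx\to 0$ and $\int F(u_n)\,dx\to 0$; using $(f_3)$ then forces $c=\lim I(u_n)\leq 0$, contradicting $c>0$. Therefore there exist $\epsilon,\delta>0$ and a subsequence with $\mathrm{mes}([|u_n|>\epsilon])\geq\delta$, and Theorem \ref{lieb} furnishes $(z_n)\subset\mathbb{Z}^N$ such that $v_n(\cdot):=u_n(\cdot+z_n)$ has a subsequence whose $L^\Phi_{loc}$-limit $v$ is nontrivial. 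By the $\mathbb{Z}^N$-periodicity of $V$, $(v_n)$ remains a Cerami sequence for $I$ at level $c$.

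The main obstacle is concluding that this nontrivial $v$ is a weak solution, because when $l=1$ the space $W^{1,\Phi}(\mathbb{R}^N)$ is nonreflexive and we cannot appeal to weak convergence to pass to the limit in $I'(v_n)\varphi=o(1)$ for $\varphi\in C_c^\infty(\mathbb{R}^N)$. I would handle this by extracting (along a further subsequence) $v_n\to v$ almost everywhere on $\mathbb{R}^N$ from the local $L^\Phi$-convergence, establishing pointwise convergence $\nabla v_n\to\nabla v$ a.e.\ on every ball through a Boccardo--Murat type argument adapted to the $\Phi$-Laplacian (testing with $(v_n-v)\eta$ for a cutoff $\eta$ and exploiting the strict monotonicity of $\xi\mapsto\phi(|\xi|)\xi$ guaranteed by $(\phi_1)$), and then using Vitali's convergence theorem on each fixed ball: the modular bound on $\Phi(|\nabla v_n|)$ and the equi-integrability of $\phi(|\nabla v_n|)\nabla v_n\cdot\nabla\varphi$, $V(x)\phi(|v_n|)v_n\varphi$ and $f(v_n)\varphi$ (the latter coming from $(f_1),(f_2)$ and the local modular bounds on $\Phi$ and $B$) allow passing to the limit in $I'(v_n)\varphi$ without any weak-compactness argument. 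This yields $I'(v)=0$ with $v\neq 0$, completing the proof.
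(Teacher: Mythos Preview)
Your overall strategy---mountain pass geometry, boundedness of a critical sequence, the nonvanishing alternative via Theorems~\ref{Lions1} and~\ref{lieb}, and a Boccardo--Murat/Vitali-type passage to the limit after $\mathbb{Z}^N$-translation---matches the paper's closely. But there is a genuine gap at the very start: you assert that $I$ is of class $C^1$ and then speak of a Cerami sequence and of $I'(u_n)$. This is precisely what fails when $l=1$. The paper observes that while $\mathcal{F}(u)=\int F(u)$ is $C^1$, the quasilinear part $Q(u)=\int(\Phi(|\nabla u|)+V\Phi(|u|))$ is only \emph{G\^ateaux} differentiable when $\widetilde{\Phi}$ does not satisfy $\Delta_2$ (see the discussion around Lemma~\ref{solucao}); the Nemytskii map $\nabla u\mapsto \phi(|\nabla u|)\nabla u$ need not be continuous from $L^{\Phi}$ to $L^{\widetilde{\Phi}}$ without $\Delta_2$ on $\widetilde{\Phi}$. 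Hence the classical mountain pass theorem is unavailable, and one cannot simply produce a Cerami sequence for a $C^1$ functional.

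The paper's fix is to work in Szulkin's nonsmooth critical point framework: write $I=Q-\mathcal{F}$ with $Q$ convex and weak$^*$ l.s.c.\ and $\mathcal{F}\in C^1$, and use the variant of the mountain pass theorem for such functionals to obtain a $(PS)_d$ sequence in the sense of \eqref{sequencia2}, namely
\[
Q(w)-Q(v_n)\ \ge\ \int_{\mathbb{R}^N}f(v_n)(w-v_n)\,dx-\tau_n\|w-v_n\|,\qquad \tau_n\to 0.
\]
From G\^ateaux differentiability of $Q$ one still extracts $\partial I(u_n)/\partial u_n=o_n(1)\|u_n\|$, which is enough for the boundedness argument you outlined; and the limit passage uses the ``almost weak convergence'' Lemma~\ref{pontual} (boundedness of $\phi(|\nabla u_n|)|\nabla u_n|$ in $L^{\widetilde{\Phi}}$ plus a.e.\ convergence) in place of any reflexivity or $C^1$ structure. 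You correctly identified nonreflexivity as the obstacle in the limit step, but it bites earlier: your proof needs to replace the $C^1$/Cerami machinery by the Szulkin setup before the argument can proceed.
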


The theorem above complements to study made in \cite[Theorem 1.5]{AGJ}, because in that paper $\Phi$ and $\widetilde{\Phi}$ satisfy the $\Delta_2$-condition, while that in our paper, it is not necessary $\widetilde{\Phi}$ to satisfy the $\Delta_2$-condition. 

Quasilinear elliptic problems have been considered using different assumptions on the N-function $\Phi$. For example in the papers  \cite{JVMLED},\cite{MSGC},\cite{chung},\cite{MR1},\cite{MR2},\cite{MRep},\cite{MT},\cite{MugnaiPapageorgiou},\cite{MR3} and \cite{fang}, the authors assumed that $\Phi$ and $\widetilde{\Phi}$ satisfy the $\Delta_{2}$-condition, then in those papers the Orlicz-Sobolev space $W^{1,\Phi}(\Omega)$ is a reflexive Banach space. This assertion is used several times in order to get a nontrivial solution for elliptic problems taking into account the weak topology. In the present  paper, the main goal is the use of techniques that allow one to deal with problem $(P)$ without assuming the $\Delta_{2}$-condition in one of the functions $\Phi$ or $\widetilde{\Phi}$. This type of problem brings us many difficulties when we intend to apply variational methods directly in $W^{1,\Phi}(\R^N)$. For example, in our paper the energy functional associated with the problem may be not $C^{1}$, then the classical variational cannot be used here. In order to overcome this difficulty we have used the minimax theory developed by Szulkin \cite{Szulkin}. Moreover, another important difficulty is associated with the fact that we cannot use the weak topology on the space $W^{1,\Phi}(\R^N)$, because it can be nonreflexive.

In recent years many researchers have studied the non-reflexive case. For example, in \cite{GKMS}, Garc\'ia-Huidobro, Khoi, Man\'asevich and Schmitt have considered existence of solution for the following nonlinear eigenvalue problem
\begin{equation} \label{P1} 
\left\{
\begin{array}{ll}
-\Delta_{\Phi}{u}=\lambda \Psi (u), \quad \mbox{in} \quad \Omega \\
u=0, \quad \mbox{on} \quad \partial \Omega,
\end{array}
\right. 
\end{equation}
where $\Omega$ is a bounded domain, $\Phi:\mathbb{R} \to \mathbb{R}$ is a N-function and $\Psi:\mathbb{R} \to \mathbb{R}$ is a continuous function verifying some others technical conditions. In that paper, the authors have studied the situation where $\Phi$ does not satisfy the well known $\Delta_2$-condition. More precisely, in the first part of that paper the authors consider the function
\begin{equation} \label{phi*}
\Phi(t)= (e^{t^{2}}-1)/2, \quad \forall t \in \mathbb{R}.
\end{equation}
More recently, Bocea and  Mih\u{a}ilescu \cite{BM}  made a careful study about the eigenvalues of the problem
\begin{equation} \label{P2} 
\left\{
\begin{array}{ll}
-div(e^{|\nabla u|^{2}}\nabla u)-\Delta u=\lambda u, \quad \mbox{in} \quad \Omega \\
u=0, \quad \mbox{on} \quad \partial \Omega.
\end{array}
\right. 
\end{equation}
After that, Silva, Gon\c calves and Silva \cite{EGS} considered existence of multiple solutions for a class of problem like (\ref{P1}). In that paper the $\Delta_2$-condition is not also assumed and the main tool used was the truncation of the nonlinearity together with a minimization procedure for the energy functional associated to the quasilinear elliptic problem (\ref{P1}).

In \cite{DMKV}, Silva, Carvalho, Silva and Gon\c calves study a class of problem (\ref{P1}) where the energy functional satisfies the mountain pass geometry and the N-function $\widetilde{\Phi}$ does not satisfies the $\Delta_2$-condition and has a polynomial growth. Still related to the mountain geometry, in \cite{AEM}, Alves, Silva and Pimenta also considered the problem (\ref{P1}) for a large class of function $\Psi$, but supposing that the N-function $\Phi$ has an exponential growth like (\ref{phi*}).

\vspace{0.5 cm}

\section{Basics on Orlicz-Sobolev spaces}

In this section we recall some properties of Orlicz and Orlicz-Sobolev spaces, which can be found in \cite{Adams, RR}. First of all, we recall that a continuous function $\Phi : \mathbb{R} \rightarrow [0,+\infty)$ is a
N-function if:
\begin{itemize}
	\item[$(i)$] $\Phi$ is convex.
	\item[$(ii)$] $\Phi(t) = 0 \Leftrightarrow t = 0 $.
	\item[$(iii)$] $\displaystyle\lim_{t\rightarrow0}\frac{\Phi(t)}{t}=0$ and $\displaystyle\lim_{t\rightarrow+\infty}\frac{\Phi(t)}{t}= +\infty$ .
	\item[$(iv)$] $\Phi$ is even.
\end{itemize}
We say that a N-function $\Phi$ verifies the $\Delta_{2}$-condition, if
\[
\Phi(2t) \leq K\Phi(t),\quad \forall t\geq 0,
\]
for some constant $K > 0$. For instance, it can be shown that $\Phi(t)=|t|^{p}/p$ for $p>1$ satisfies the $\Delta_2$-condition, while $\Phi(t)=(e^{t^{2}}-1)/2$ does not satisfy it.

In what follows, fixed an open set $\Omega \subset \mathbb{R}^{N}$ and a N-function $\Phi$, we define the Orlicz space associated with $\Phi$ as
\[
L^{\Phi}(\Omega) = \left\{  u \in L_{loc}^{1}(\Omega) \colon \ \int_{\Omega} \Phi\left(\frac{|u|}{\alpha}\right)dx < + \infty \ \ \mbox{for some}\ \ \alpha >0 \right\}.
\]
The space $L^{\Phi}(\Omega)$ is a Banach space endowed with the Luxemburg norm given by
\[
\Vert u \Vert_{\Phi} = \inf\left\{  \alpha > 0 : \int_{\Omega}\Phi\Big(\frac{|u|}{\alpha}\Big)dx \leq1\right\}.
\]
The complementary function $\widetilde{\Phi}$ associated with $\Phi$ is given
by its Legendre's transformation, that is,
\[
\widetilde{\Phi}(s) = \max_{t\geq 0}\{ st - \Phi(t)\}, \quad  \mbox{for} \quad s\geq0.
\]
The functions $\Phi$ and $\widetilde{\Phi}$ are complementary each other and satisfy the inequality below 
\begin{equation} \label{DES2D}
	\widetilde{\Phi}(\Phi'(t))\leq \Phi(2t), \quad \forall t >0.
\end{equation}
Moreover, we also have a Young type inequality given by
\begin{equation} \label{Young}
st \leq \Phi(t) + \widetilde{\Phi}(s), \quad \forall s, t\geq0.
\end{equation} 

Using the above inequality, it is possible to prove a H\"older type inequality, that is,
\[
\Big| \int_{\Omega}uvdx \Big| \leq 2 \Vert u \Vert_{\Phi}\Vert v \Vert_{ \widetilde{\Phi}},\quad \forall u \in L^{\Phi}(\Omega) \quad \mbox{and} \quad \forall v \in L^{\widetilde{\Phi}}(\Omega).
\]

Another important function related to function $\Phi$, it is the Sobolev conjugate function $\Phi_{*}$ of $\Phi$ defined by 
$$
\Phi^{-1}_{*}(t)=\displaystyle\int^{t}_{0}\displaystyle\frac{\Phi^{-1}(s)}{s^{(N+1)/N}} ds \,\,\, \mbox{for} \,\,\, t >0,
$$
when
$$
\displaystyle\int^{+\infty}_{1}\displaystyle\frac{\Phi^{-1}(s)}{s^{(N+1)/N}} ds=+\infty.
$$

The corresponding Orlicz-Sobolev space is defined by
\[
W^{1, \Phi}(\Omega) = \Big\{ u \in L^{\Phi}(\Omega) \ :\ \frac{\partial u}{\partial x_{i}} \in L^{\Phi}(\Omega), \quad i = 1, ..., N\Big\},
\]
endowed with the norm
\[
\Vert u \Vert = \Vert \nabla u \Vert_{\Phi} + \Vert u \Vert_{\Phi}.
\]

The space $W_0^{1,\Phi}(\Omega)$ is defined as the weak$^*$ closure of $C_0^{\infty}(\Omega)$ in $W^{1,\Phi}(\Omega)$.  Here we refer the readers to the important works \cite{gossez,gossez2}. The spaces $L^{\Phi}(\Omega)$, $W^{1, \Phi}(\Omega)$ and $W_0^{1, \Phi}(\Omega)$ are separable and reflexive, when $\Phi$ and $\widetilde{\Phi}$ satisfy the $\Delta_{2}$-condition.

If $|\Omega|<+\infty$, $E^{\Phi}(\Omega)$ denotes the closure of $L^{\infty}(\Omega)$ in $L^{\Phi}(\Omega)$ with respect to the norm $\|\,\,\|_{\Phi}$. When $|\Omega|=+\infty$, $E^{\Phi}(\Omega)$ denotes the closure of $C_0^{\infty}(\Omega)$ in $L^{\Phi}(\Omega)$ with respect to the norm $\|\,\,\|_{\Phi}$. In any one of these cases, $L^{\Phi}(\Omega)$ is the dual space of $E^{\widetilde{\Phi}}(\Omega)$, while $L^{\widetilde{\Phi}}(\Omega)$ is the dual space of $E^{\Phi}(\Omega)$. Moreover, $E^{\Phi}(\Omega)$ and $E^{\widetilde{\Phi}}(\Omega)$ are separable spaces and any continuous linear functional $M:E^{\Phi}(\Omega) \to \mathbb{R}$ is of the form
$$
M(v)=\int_{\Omega}v(x)g(x)\,dx \quad \mbox{for some} \quad g \in L^{\widetilde{\Phi}}(\Omega).
$$
We recall that if $\Phi$ verifies $\Delta_2$-condition, we then have $E^{\Phi}(\Omega)=L^{\Phi}(\Omega)$.  

$W^{1}E^{\Phi}(\Omega)$ is defined analogously and it is also separable. Moreover, the Banach space $W_0^{1}E^{\Phi}(\Omega)$ is the closure of $C_0^{\infty}(\Omega)$ in $W^{1,\Phi}(\Omega)$ with respect to the norm $\Vert \,\,\, \Vert$.

Next we state a very important lemma due to Donaldson \cite[Proposition 1.1]{donaldson} that will be use later on.

\begin{lemma}  \label{Estrela} Assume that $\Phi$ is a N-function.  If $(u_n) \subset W^{1,\Phi}(\Omega) $ is a bounded sequence, then there are a subsequence of $(u_n)$, still denoted by itself, and $u \in  W^{1,\Phi}(\Omega)$ such that
	$$
	u_n \stackrel{*}{\rightharpoonup} u \quad \mbox{in} \quad W^{1,\Phi}(\Omega)
	$$
	and
	$$
	\int_{\Omega}u_n v \,dx  \to  \int_{\Omega}u v \,dx, \quad  \int_{\Omega}\frac{\partial u_n}{\partial x_i} w \,dx  \to  \int_{\Omega}\frac{\partial u}{\partial x_i} w \,dx, \quad \forall v,w \in  E^{\widetilde{\Phi}}(\Omega).
	$$
\end{lemma}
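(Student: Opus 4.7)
The plan is to use the fact that, although $L^{\Phi}(\Omega)$ need not be reflexive when $\Phi$ fails the $\Delta_2$-condition, it is the topological dual of the separable space $E^{\widetilde{\Phi}}(\Omega)$ (a fact recorded earlier in the excerpt). Weak-$*$ sequential compactness of bounded sets in duals of separable spaces (the sequential Banach--Alaoglu theorem) will give all convergences for free; the only non-trivial point is to check that the weak-$*$ limit of the derivatives is the weak derivative of the weak-$*$ limit of $u_n$.

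\textbf{Step 1 (extract a weak-$*$ limit for $u_n$).} Since $(u_n)$ is bounded in $W^{1,\Phi}(\Omega)$, each of the $N+1$ sequences $(u_n)$ and $(\partial u_n/\partial x_i)$, $i=1,\dots,N$, is bounded in $L^{\Phi}(\Omega)$. Using $L^{\Phi}(\Omega) \cong \big(E^{\widetilde{\Phi}}(\Omega)\big)^*$ and the separability of $E^{\widetilde{\Phi}}(\Omega)$, the sequential Banach--Alaoglu theorem yields a subsequence, not relabeled, and elements $u, u_1,\dots,u_N \in L^{\Phi}(\Omega)$ such that
$$
\int_{\Omega} u_n v\,dx \to \int_{\Omega} u v\,dx, \qquad \int_{\Omega} \frac{\partial u_n}{\partial x_i} w\,dx \to \int_{\Omega} u_i w\,dx
$$
for every $v,w \in E^{\widetilde{\Phi}}(\Omega)$. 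A diagonal argument in $i=1,\dots,N$ gives a single subsequence that works for all indices simultaneously.

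\textbf{Step 2 (identify $u_i = \partial u/\partial x_i$).} I would then show that $u \in W^{1,\Phi}(\Omega)$ with $\partial u/\partial x_i = u_i$ in the distributional sense. For this, fix any test function $\varphi \in C_0^{\infty}(\Omega)$. Since $\varphi$ is bounded with compact support, both $\varphi$ and $\partial \varphi/\partial x_i$ belong to $E^{\widetilde{\Phi}}(\Omega)$ (compactly supported $L^\infty$ functions lie in $E^{\widetilde{\Phi}}$ because one can approximate them by themselves in the Luxemburg norm). Hence, using the integration by parts formula valid for $u_n \in W^{1,\Phi}(\Omega)$,
$$
\int_{\Omega} u_i \varphi\,dx = \lim_{n\to\infty}\int_{\Omega}\frac{\partial u_n}{\partial x_i}\varphi\,dx = -\lim_{n\to\infty}\int_{\Omega} u_n\,\frac{\partial \varphi}{\partial x_i}\,dx = -\int_{\Omega} u\,\frac{\partial \varphi}{\partial x_i}\,dx.
$$
This is precisely the definition of $\partial u/\partial x_i = u_i$ as a distributional derivative. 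Since $u_i \in L^{\Phi}(\Omega)$, we conclude $u \in W^{1,\Phi}(\Omega)$.

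\textbf{Step 3 (conclude the weak-$*$ convergence in $W^{1,\Phi}$).} With $u$ so identified, the convergences obtained in Step~1 give exactly the required pairings against test functions in $E^{\widetilde{\Phi}}(\Omega)$, and hence $u_n \stackrel{*}{\rightharpoonup} u$ in $W^{1,\Phi}(\Omega)$ in the natural product weak-$*$ sense. The main subtlety, and the only place that really uses the absence of reflexivity, is the careful choice of the predual $E^{\widetilde{\Phi}}(\Omega)$ rather than $L^{\widetilde{\Phi}}(\Omega)$; once that is fixed, separability makes the whole argument go through by the standard dual sequential compactness. I do not expect any technical obstacle beyond verifying $C_0^{\infty}(\Omega) \subset E^{\widetilde{\Phi}}(\Omega)$, which is routine.
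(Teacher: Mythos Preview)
Your proposal is correct and is precisely the standard argument for this kind of result: use that $L^{\Phi}(\Omega)=\big(E^{\widetilde{\Phi}}(\Omega)\big)^*$ with separable predual, apply sequential Banach--Alaoglu componentwise, and identify the limit of the partials via integration by parts against $C_0^{\infty}(\Omega)\subset E^{\widetilde{\Phi}}(\Omega)$.

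Note, however, that the paper does not give its own proof of this lemma: it simply states the result and attributes it to Donaldson \cite[Proposition 1.1]{donaldson}. So there is no in-paper argument to compare against. Your proof is exactly the natural one and is, in substance, what Donaldson's argument amounts to; the only ingredients beyond general functional analysis are the duality $L^{\Phi}=(E^{\widetilde{\Phi}})^*$ and the separability of $E^{\widetilde{\Phi}}$, both of which the paper records just before stating the lemma.
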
	

As an immediate consequence of the last lemma is the  following result that applies an important role in our work.  
\begin{corollary}  \label{Corolario0} Assume that $\Phi$ is a N-function. If $(u_n) \subset W^{1,\Phi}(\Omega) $ is a bounded sequence with 
	$u_n \to u$ in $L_{loc}^{\Phi}(\Omega)$, then $u \in  W^{1,\Phi}(\Omega)$. 	
\end{corollary}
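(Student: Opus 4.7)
The plan is to combine Lemma \ref{Estrela} with the local strong convergence to identify a weak$^{*}$ limit with $u$. Since $(u_n)\subset W^{1,\Phi}(\Omega)$ is bounded, Lemma \ref{Estrela} furnishes a subsequence (not relabeled) and an element $v\in W^{1,\Phi}(\Omega)$ such that
$$
u_n \stackrel{*}{\rightharpoonup} v \quad \mbox{in} \quad W^{1,\Phi}(\Omega),
$$
and in particular
$$
\int_{\Omega} u_n\,\psi\,dx \;\longrightarrow\; \int_{\Omega} v\,\psi\,dx, \qquad \forall\,\psi\in E^{\widetilde{\Phi}}(\Omega).
$$
The strategy is then to prove that $u=v$ almost everywhere in $\Omega$, which immediately yields $u\in W^{1,\Phi}(\Omega)$.

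To identify $u$ with $v$, I would test against $\psi\in C_0^{\infty}(\Omega)$. Such a $\psi$ is bounded and compactly supported, hence belongs to $E^{\widetilde{\Phi}}(\Omega)$, so the weak$^{*}$ convergence gives $\int_{\Omega}u_n\psi\,dx\to\int_{\Omega}v\psi\,dx$. On the other hand, setting $K=\operatorname{supp}\psi$, the hypothesis $u_n\to u$ in $L^{\Phi}_{loc}(\Omega)$ implies $u_n\to u$ in $L^{\Phi}(K)$; by the H\"older type inequality recalled in the preliminaries,
$$
\Bigl|\int_{\Omega}(u_n-u)\psi\,dx\Bigr| \;=\; \Bigl|\int_{K}(u_n-u)\psi\,dx\Bigr| \;\leq\; 2\,\|u_n-u\|_{\Phi,K}\,\|\psi\|_{\widetilde{\Phi},K} \;\longrightarrow\; 0.
$$
Therefore
$$
\int_{\Omega} u\,\psi\,dx \;=\; \int_{\Omega} v\,\psi\,dx, \qquad \forall\,\psi\in C_0^{\infty}(\Omega),
$$
and by the standard density/duality argument (du Bois--Reymond) we conclude $u=v$ almost everywhere in $\Omega$. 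Since $v\in W^{1,\Phi}(\Omega)$, the corollary follows.

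I do not expect any serious obstacle here; the only minor subtlety is to check that $C_0^{\infty}(\Omega)\subset E^{\widetilde{\Phi}}(\Omega)$, which is legitimate because the elements of $C_0^{\infty}(\Omega)$ are bounded with compact support and $E^{\widetilde{\Phi}}(\Omega)$ is defined precisely as the appropriate closure of $C_0^{\infty}(\Omega)$ (or of $L^{\infty}$ on bounded sets). The compatibility between the two senses of convergence on such test functions is what links the weak$^{*}$ limit of Donaldson's lemma with the pointwise (almost everywhere) limit coming from $L^{\Phi}_{loc}$ convergence, and this identification is the whole content of the argument.
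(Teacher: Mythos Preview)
Your proposal is correct and follows exactly the route the paper intends: the corollary is stated there as an ``immediate consequence'' of Donaldson's Lemma~\ref{Estrela}, without further details, and your argument---extract a weak$^{*}$ convergent subsequence with limit $v\in W^{1,\Phi}(\Omega)$, then test against $\psi\in C_0^{\infty}(\Omega)\subset E^{\widetilde\Phi}(\Omega)$ and use the $L^{\Phi}_{loc}$ convergence together with the H\"older-type inequality to identify $u=v$ a.e.---is precisely the unpacking of that phrase. The only point worth recording is that passing to a subsequence is harmless here because the conclusion concerns the membership $u\in W^{1,\Phi}(\Omega)$ rather than convergence of the full sequence.
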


The lemma just above is crucial when the space $W^{1,\Phi}(\Omega)$ is not reflexive, for example if $\Phi(t)=|t|\log(1+|t|)$. However, if $\Phi(t)=|t|^{p}/p$ and $p>1$, the above lemma is not necessary since $\Phi$ and $\tilde{\Phi}$ satisfy the $\Delta_2$-condition, and so, $W^{1,\Phi}(\Omega)$ is reflexive. Here we would like to point out that the condition $(\phi_3)$ ensures that ${\Phi}$ and $\tilde{\Phi}$ verify the $\Delta_2$-condition when $l>1$, for more details see Fukagai and Narukawa \cite{FN}.

 In  \cite{FN}, it is proved the following result

\begin{lemma}\label{F1} Assume that $(\phi_1)-(\phi_3)$ hold and let $\xi_{0}(t)=\min\{t^{l},t^{m}\}$,\linebreak $ \xi_{1}(t)=\max\{t^{l},t^{m}\},$ for all $t\geq 0$. Then,
	$$
	\xi_{0}(\rho)\Phi(t) \leq \Phi(\rho t) \leq  \xi_{1}(\rho)\Phi(t) \;\;\; \mbox{for} \;\; \rho, t \geq 0
	$$
	and
	$$
	\xi_{0}(\|u\|_{\Phi}) \leq \int_{\mathbb{R}^{N}}\Phi(|u|)\,dx \leq \xi_{1}(\|u\|_{\Phi})  \;\;\; \mbox{for} \;\; u \in L^{\Phi}(\mathbb{R}^{N}).
	$$
\end{lemma}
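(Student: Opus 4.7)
The statement splits into a pointwise scaling bound $\xi_0(\rho)\Phi(t)\le\Phi(\rho t)\le\xi_1(\rho)\Phi(t)$ and a modular--norm equivalence; the second will follow from the first by a standard Luxemburg-norm argument, so the real work is at the pointwise level. My plan is to integrate a logarithmic derivative exploiting $(\phi_3)$, to observe that $\Phi$ itself still satisfies $\Delta_2$ under $(\phi_3)$ (even in the borderline case $l=1$, which is the one that forces the rest of the paper to avoid reflexivity), and then to translate the pointwise bound into the integral form using the attainment of the Luxemburg infimum.

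For the pointwise inequality I fix $t>0$ and set $g(\rho)=\log\Phi(\rho t)$. Since $\Phi'(s)=s\phi(s)$ for $s>0$, a direct computation gives
\begin{equation*}
g'(\rho)=\frac{t\,\Phi'(\rho t)}{\Phi(\rho t)}=\frac{1}{\rho}\cdot\frac{(\rho t)^{2}\phi(\rho t)}{\Phi(\rho t)},
\end{equation*}
and $(\phi_3)$ confines this ratio to $[l/\rho,m/\rho]$. Integrating from $1$ to $\rho$ when $\rho\ge 1$ (and from $\rho$ to $1$ when $0<\rho\le 1$) and exponentiating yields $\rho^{l}\le\Phi(\rho t)/\Phi(t)\le\rho^{m}$ in the former case and the reversed bounds $\rho^{m}\le\Phi(\rho t)/\Phi(t)\le\rho^{l}$ in the latter. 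In both cases the outer bounds are precisely $\xi_{0}(\rho)$ and $\xi_{1}(\rho)$, which is the claim. The degenerate cases $\rho=0$ and $t=0$ are trivial.

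For the norm statement I first note that the pointwise inequality applied at $\rho=2$ gives $\Phi(2t)\le 2^{m}\Phi(t)$, so $\Phi\in\Delta_{2}$ regardless of whether $\widetilde{\Phi}$ is. Consequently the modular $\alpha\mapsto\int_{\mathbb{R}^{N}}\Phi(|u|/\alpha)\,dx$ is continuous on $(0,\infty)$ by dominated convergence (the $\Delta_{2}$-condition supplies the integrable majorant), so for each $u\in L^{\Phi}(\mathbb{R}^{N})\setminus\{0\}$ the infimum defining the Luxemburg norm is attained, i.e.\ $\int_{\mathbb{R}^{N}}\Phi(|u|/\Vert u\Vert_{\Phi})\,dx=1$. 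Applying the pointwise bound with $\rho=\Vert u\Vert_{\Phi}$ and with $t$ replaced pointwise by $|u(x)|/\Vert u\Vert_{\Phi}$ and then integrating over $\mathbb{R}^{N}$ delivers the stated bounds. The only step that is less than routine is this continuity/attainment argument, but it is classical once $\Phi\in\Delta_2$ is known; the rest is calculus and integration.
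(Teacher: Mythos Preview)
Your argument is correct. The paper itself does not prove this lemma at all: it simply records the statement and attributes it to Fukagai--Ito--Narukawa \cite{FN}, so there is no in-paper proof to compare against. What you have written is precisely the standard proof one finds in that reference: differentiate $\rho\mapsto\log\Phi(\rho t)$, use $(\phi_3)$ to trap the logarithmic derivative between $l/\rho$ and $m/\rho$, integrate, and then pass to the modular inequality via the unit-ball identity $\int\Phi(|u|/\Vert u\Vert_{\Phi})\,dx=1$, which is available because $(\phi_3)$ forces $\Phi\in\Delta_2$ (even when $l=1$). Your observation that only $\Phi\in\Delta_2$, not $\widetilde{\Phi}\in\Delta_2$, is needed here is exactly the point that makes the lemma usable in the nonreflexive setting of the paper.
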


\begin{lemma} \label{F2} Assume that $(\phi_1)-(\phi_3)$ hold and let  $\xi_{2}(t)=\min\{t^{l^{*}},t^{m^{*}}\},$ $\xi_{3}(t)=\max\{t^{l^{*}},t^{m^{*}}\}$, for all $t\geq 0$. Then,
	$$
	\xi_{2}(\rho)\Phi_*(t) \leq \Phi_*(\rho t) \leq \xi_{3}(\rho)\Phi_*(t) \;\;\; \mbox{for} \;\; \rho, t \geq 0
	$$
	and
	$$
	\xi_{2}(\|u\|_{\Phi_*}) \leq \int_{\mathbb{R}^{N}}\Phi_*(|u|)\,dx \leq \xi_{3}(\|u\|_{\Phi_*})  \;\;\; \mbox{for} \;\; u \in L_{\Phi_*}(\mathbb{R}^{N}).
	$$
\end{lemma}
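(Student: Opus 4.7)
The strategy is to imitate the proof of Lemma~\ref{F1}, which is the same statement for $\Phi$. Everything reduces to the pointwise logarithmic-derivative bound
$$
l^{*}\le \frac{t\,\Phi_{*}'(t)}{\Phi_{*}(t)}\le m^{*},\qquad t>0,
$$
because the multiplicative inequality $\xi_{2}(\rho)\Phi_{*}(t)\le \Phi_{*}(\rho t)\le \xi_{3}(\rho)\Phi_{*}(t)$ follows from it by integration in the variable $\log t$ (distinguishing $\rho\ge 1$ and $\rho\le 1$), and the modular estimate then follows by the usual Luxemburg-norm argument: for $u\ne 0$ put $\rho=\|u\|_{\Phi_{*}}$, use that $\Phi_{*}$ satisfies the $\Delta_{2}$-condition once the above bound is proved (since $m^{*}<+\infty$) to get $\int_{\mathbb{R}^{N}}\Phi_{*}(|u|/\rho)\,dx=1$, and apply the pointwise estimate with $t=|u(x)|/\rho$.

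To obtain the key bound, set $\Psi:=\Phi_{*}$. From the very definition of $\Phi_{*}^{-1}$ we have $(\Psi^{-1})'(t)=\Phi^{-1}(t)\,t^{-(N+1)/N}$, and the inverse function rule at $s=\Psi^{-1}(t)$ gives
$$
\frac{s\,\Psi'(s)}{\Psi(s)}=\frac{\Psi^{-1}(t)}{t\,(\Psi^{-1})'(t)}=\frac{\Psi^{-1}(t)\,t^{1/N}}{\Phi^{-1}(t)}.
$$
Now $(\phi_{3})$ is equivalent to $\rho^{l}\Phi(t)\le\Phi(\rho t)\le\rho^{m}\Phi(t)$ for $\rho\ge 1$ (and the reverse inequalities for $\rho\le 1$), which by passing to the inverse becomes
$$
(s/t)^{1/l}\Phi^{-1}(t)\le \Phi^{-1}(s)\le (s/t)^{1/m}\Phi^{-1}(t),\qquad 0<s\le t.
$$
Plugging these two-sided estimates into $\Psi^{-1}(t)=\int_{0}^{t}\Phi^{-1}(s)\,s^{-(N+1)/N}\,ds$ and evaluating the elementary integrals $\int_{0}^{t} s^{1/l-1-1/N}\,ds$ and $\int_{0}^{t} s^{1/m-1-1/N}\,ds$ (both convergent because $l,m<N$) yields
$$
\frac{Nl}{N-l}\,\Phi^{-1}(t)\,t^{-1/N}\le \Psi^{-1}(t)\le \frac{Nm}{N-m}\,\Phi^{-1}(t)\,t^{-1/N},
$$
i.e. $l^{*}\le \Psi^{-1}(t)\,t^{1/N}/\Phi^{-1}(t)\le m^{*}$, which by the displayed formula above is exactly the pointwise bound we needed.

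With the pointwise bound in hand, integrating $\Phi_{*}'(s)/\Phi_{*}(s)$ in $d\log s$ between $t$ and $\rho t$ gives, for $\rho\ge 1$, $\rho^{l^{*}}\Phi_{*}(t)\le \Phi_{*}(\rho t)\le \rho^{m^{*}}\Phi_{*}(t)$, and the reverse order of exponents for $0\le\rho\le 1$; the two cases combine into the stated bound with $\xi_{2},\xi_{3}$. The modular inequality then follows verbatim as in Lemma~\ref{F1}: for $u\ne 0$ and $\rho=\|u\|_{\Phi_{*}}$, applying the pointwise estimate to $t=|u(x)|/\rho$ and integrating, together with $\int_{\mathbb{R}^{N}}\Phi_{*}(|u|/\rho)\,dx=1$, produces $\xi_{2}(\rho)\le \int_{\mathbb{R}^{N}}\Phi_{*}(|u|)\,dx\le \xi_{3}(\rho)$, and the case $u=0$ is trivial.

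The real work is concentrated in the middle paragraph: one has to translate $(\phi_{3})$ to a two-sided bound on $\Phi^{-1}$, keep careful track of which inequality flips when $s/t\le 1$, and check integrability at $0$ of the resulting expressions. This last point is exactly where the hypothesis $l,m<N$ in $(\phi_{3})$ is essential; the logarithmic-derivative step on either side is standard once the pointwise bound has been secured.
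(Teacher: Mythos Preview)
The paper does not supply its own proof of this lemma; both Lemmas~\ref{F1} and~\ref{F2} are quoted from Fukagai--Ito--Narukawa \cite{FN} without argument. Your proof is correct and is essentially the standard one found there: reduce everything to the logarithmic-derivative bound $l^{*}\le t\Phi_{*}'(t)/\Phi_{*}(t)\le m^{*}$, obtain this from the integral formula for $\Phi_{*}^{-1}$ together with the two-sided estimate on $\Phi^{-1}$ coming from $(\phi_{3})$, and then deduce the multiplicative and modular inequalities exactly as in Lemma~\ref{F1}.
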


The next lemma is a technical result that will be used later on. It will be important in our approach, because we are only supposing that $\Phi$ is a N-function.   

\begin{lemma} \label{pontual} ( Almost weak converge in $L^{\Phi}(\mathbb{R}^N)$) Let $(w_n) \subset L^{\Phi}(\mathbb{R}^N)$ be a bounded sequence with $w_n(x) \to w(x)$ a.e. in $\mathbb{R}^N$. Then, $w \in  L^{\Phi}(\mathbb{R}^N)$ and 
	$$
	\int_{\mathbb{R}^N}w_n v \, dx \to \int_{\mathbb{R}^N}w v \, dx, \quad \forall v \in C_0^{\infty}(\mathbb{R}^N). 
	$$	
\end{lemma}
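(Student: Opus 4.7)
The plan is to split the claim into two parts: showing that $w \in L^\Phi(\mathbb{R}^N)$, and then the convergence of the integrals tested against any $v \in C_0^\infty(\mathbb{R}^N)$. For the first I would invoke Fatou's lemma directly on the Luxemburg modular; for the second I would reduce to $L^1$ convergence on the compact support of $v$ via Vitali's convergence theorem.

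For the membership step, set $M := \sup_n \|w_n\|_\Phi < \infty$. For any $\alpha > M$, the definition of the Luxemburg norm yields $\int_{\mathbb{R}^N} \Phi(|w_n|/\alpha)\,dx \le 1$. Since $\Phi \ge 0$ is continuous and $w_n(x) \to w(x)$ a.e., Fatou's lemma gives $\int_{\mathbb{R}^N} \Phi(|w|/\alpha)\,dx \le 1$, so $\|w\|_\Phi \le \alpha$ for every $\alpha > M$, and hence $w \in L^\Phi(\mathbb{R}^N)$ with $\|w\|_\Phi \le M$.

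For the convergence step, fix $v \in C_0^\infty(\mathbb{R}^N)$ with compact support $K$. The goal is to prove $w_n \to w$ in $L^1(K)$, from which
$$
\left|\int_{\mathbb{R}^N}(w_n - w)\,v\,dx\right| \le \|v\|_\infty \int_K |w_n - w|\,dx \to 0
$$
is immediate. Since $|K|<\infty$ and $w_n\to w$ a.e.\ on $K$, by Vitali's theorem it suffices to establish uniform absolute continuity of the integrals $\int_E |w_n|\,dx$. For any measurable $E\subset K$ the Hölder-type inequality in Orlicz spaces gives
$$
\int_E |w_n|\,dx \le 2\|w_n\|_\Phi \,\|\chi_E\|_{\widetilde{\Phi}} \le 2M\,\|\chi_E\|_{\widetilde{\Phi}},
$$
so the task reduces to proving $\|\chi_E\|_{\widetilde{\Phi}}\to 0$ as $|E|\to 0$. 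This follows directly from the identity $\|\chi_E\|_{\widetilde{\Phi}} = \inf\{\alpha>0 : \widetilde{\Phi}(1/\alpha)\,|E| \le 1\}$: given $\alpha>0$, one has $\|\chi_E\|_{\widetilde{\Phi}}\le \alpha$ as soon as $|E| \le 1/\widetilde{\Phi}(1/\alpha)$, and $\widetilde{\Phi}(1/\alpha)$ is finite and positive because $\widetilde{\Phi}$ is a N-function.

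The main subtlety is that in the non-reflexive setting one cannot freely use weak convergence in $L^\Phi(\mathbb{R}^N)$; Lemma~\ref{Estrela} only provides weak-$*$ convergence along a subsequence and only against test functions in $E^{\widetilde{\Phi}}(\mathbb{R}^N)$. The Vitali route sidesteps this altogether, and the genuine content of the argument is the modular estimate $\|\chi_E\|_{\widetilde{\Phi}}\to 0$ as $|E|\to 0$, which uses nothing about a $\Delta_2$ condition, only that $\widetilde{\Phi}$ is a N-function with $\widetilde{\Phi}(t)\to\infty$ as $t\to\infty$.
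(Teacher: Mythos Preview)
Your proof is correct and is actually somewhat cleaner than the paper's. Both arguments establish $w\in L^\Phi(\mathbb{R}^N)$ via Fatou's lemma on the modular, so the membership step is essentially the same. The difference is in the convergence step: the paper works by hand, introducing the Egorov-type sets $\Omega_k=\{x\in\Omega:\ |w_n(x)-w(x)|\le 1\ \text{for all }n\ge k\}$, bounding the integral over $\Omega\setminus\Omega_k$ by $2M\,\|v\|_{L^{\widetilde{\Phi}}(\Omega\setminus\Omega_k)}$ (which is small because $|\Omega\setminus\Omega_k|\to 0$), and invoking dominated convergence on $\Omega_k$ with dominant $|v|$. You instead package the whole thing into Vitali's theorem, reducing the question to the single estimate $\|\chi_E\|_{\widetilde{\Phi}}\to 0$ as $|E|\to 0$. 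Both routes ultimately rest on the same fact---absolute continuity of the $L^{\widetilde{\Phi}}$-norm of a bounded function on sets of small measure, which needs only that $\widetilde{\Phi}$ is an $N$-function---but your formulation isolates this point more transparently and avoids the explicit decomposition.
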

\begin{proof} To begin with, we will prove that $w \in L^{\Phi}(\mathbb{R}^N)$. If $\|w_n\|_{L^{\Phi}(\mathbb{R}^N)} \to 0$, we have  $w_n \to 0$ in $L^{\Phi}(\mathbb{R}^N)$, then  $w=0$, finishing the proof. 
	
	In what follows, we will assume that $\|w_n\|_{L^{\Phi}(\mathbb{R}^N)} \not\to 0$, consequently for some subsequence, still denoted by $(w_n)$,  
	$$
	\|w_n\|_{L^{\Phi}(\mathbb{R}^N)} \geq \delta, \quad \forall n \in\mathbb{N}, 
	$$
	and 
	$$
	\|w_n\|_{L^{\Phi}(\mathbb{R}^N)} \to \alpha>0.
	$$
	Since 
	$$
	\int_{\mathbb{R}^N}\Phi\left(\frac{|w_n|}{\|w_n\|_{L^{\Phi}(\mathbb{R}^N)}}\right)\,dx \leq 1, \quad \forall n \in \mathbb{N},
	$$
	the Fatou's Lemma leads to
	$$
	\int_{\mathbb{R}^N}\Phi\left(\frac{|w|}{\alpha}\right)\,dx \leq 1, 
	$$
	from where it follows that $w \in L^{\Phi}(\mathbb{R}^N)$.

	Now, for a fixed $v \in  C_0^{\infty}(\mathbb{R}^N)$, we set $\Omega=supp(v)$ and for $k \in \mathbb{N}$ 
	$$
	\Omega_k=\{x \in \Omega\,:\, \forall n \geq k, \,\, \, |w_n(x)-w(x)|\leq 1  \}.
	$$
	Since $w_n(x) \to w(x)$ a.e. in $\mathbb{R}^N$, a simple computation gives
	$$
	mes(\Omega_k) \to mes(\Omega)  \quad \mbox{and} \quad  mes(\Omega \setminus \Omega_k) \to 0 \quad \mbox{as} \quad k \to +\infty. 
	$$
	Given $\epsilon>0$, let us fix $k$ such that $\|v\|_{L^{\widetilde{\Phi}}(\Omega \setminus \Omega_k)}<\frac{\epsilon}{4M}$, where 
	$$
	M=\max\left\{\sup_{n \in \mathbb{N}}\|w_n\|_{L^{\widetilde{\Phi}}(\Omega)},\|w\|_{L^{\widetilde{\Phi}}(\Omega)}\right\}.
	$$	
	Using this information, we find
	$$
	\left| \int_{\Omega}w_n v \, dx - \int_{\Omega}w v \, dx \right| \leq \int_{\Omega_k}|w_n -u||v|\,dx + \frac{\epsilon}{2}, \quad \forall n \in \mathbb{N}.
	$$
	By definition of $\Omega_k$, for $n \geq k$ we have
	$$
	|w_n(x)-w(x)|\leq 1, \quad \forall x \in \Omega_k.
	$$
	Hence by Lebesgue dominated convergence theorem
	$$
	\lim_{n \to +\infty}\int_{\Omega_k}|w_n -w||v|\,dx=0.
	$$
	Thus, there is $n_0=n_0(\epsilon,k) \in\mathbb{N}$ such that 
	$$
	\left| \int_{\Omega}w_n v \, dx - \int_{\Omega}w v \, dx \right| < \epsilon, \quad \forall n \geq n_0,
	$$
	as asserted. 
		
\end{proof}

Before concluding this section, we are going to prove Theorem \ref{Lions1}.
\begin{proof}
	Since
	$$
	\lim_{t \to 0}\frac{B(|t|)}{\Phi(|t|)}=0,
	$$
	given $\tau>0$, there is $\epsilon>0$ such that 
	$$
	B(|t|)| \leq \frac{\tau}{3 M} \Phi(|t|), \quad \forall t \in [-\epsilon, \epsilon], 
	$$
	where $M:=\sup_n\int_{\mathbb{R}^N}\Phi(|w_n|)dx$. Moreover, as 
	$$
	\lim_{t \to \infty}\frac{B(|t|)}{\Phi_*(|t|)}=0,
	$$
	there is $T>0$ such that
	$$
	B(|t|)| \leq \frac{\tau}{3 M^*} \Phi_*(|t|), \quad \forall |t|> T, 
	$$
	where $M^*:=\sup_n\int_{\mathbb{R}^N}\Phi_*(|w_n|)dx$. Therefore,
	$$
	\int_{\mathbb{R}^N}B(|w_n|)\,dx \leq\left(\int_{[|w_n|\leq \epsilon]}+\int_{[\epsilon<|w_n|\leq T]}+\int_{[|w_n|> T]}\right)B(|w_n|)\,dx \leq \frac{2\tau}{3}+ B(T)mes([|w_n|>\epsilon]).
	$$
	Now, the theorem follows using the fact that $\tau$ is arbitrary and that $mes([|w_n|>\epsilon]) \to 0$.
\end{proof}

\section{Proof of Theorem \ref{lieb}}

The main goal of this section is to show a Lieb type result for a large class of Orlicz-Sobolev spaces, without assuming the $\Delta_2$-condition. A version of Lieb type result for Sobolev space can be found in Kavian \cite[6.2 Lemme]{Kavian}. 

The first lemma this section is a technical result that is a key point in the proof of the Lieb type result for Orlicz-Sobolev spaces.

\begin{lemma} \label{L1} Let $\Phi \in C^{1}(\mathbb{R},[0,+\infty))$ be a $N$-function and $u \in W^{1,\Phi}(\mathbb{R}^N) \setminus \{0\}$ such that $\int_{\mathbb{R}^N}\Phi(|\nabla u|)\,dx \leq M$. Then, there is $y_0 \in \mathbb{R}^N$ that depends on $u$ and $C_0>0$ that does not depend on $u$ and $y_0 \in \mathbb{R}^N$  such that 
	$$
	\left(2+M\left(\int_{\mathbb{R}^N}\Phi(|u/2|)\,dx\right)^{-1}\right)^{N}mes[K(y_0) \cap supp(u)] \geq C_0,
	$$
	where $K(z)=\displaystyle \prod_{i=1}^{N}\left(z_i-\frac{1}{2},z_i+\frac{1}{2}\right)$ for all $z \in \mathbb{R}^N$. 
\end{lemma}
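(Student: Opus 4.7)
The plan is to decompose $\mathbb{R}^N$ into unit cubes, establish a Sobolev--Poincar\'e-type inequality on each cube that gains a factor $|E_z|^{1/N}$ (where $E_z := K(z)\cap\operatorname{supp}(u)$), and then sum to recover a supremum of the support measures. Write $\mathbb{R}^N = \bigcup_{z\in\mathbb{Z}^N} K(z)$ and set $\alpha := \sup_{z\in\mathbb{Z}^N} |E_z|$, $A := \int_{\mathbb{R}^N}\Phi(|u|/2)\,dx > 0$ (positive since $u\not\equiv 0$). The aim is to show that $\alpha \geq c(2+M/A)^{-N}$; then a $y_0 \in \mathbb{Z}^N$ with $|E_{y_0}| \geq \alpha/2$ delivers the conclusion after absorbing constants into $C_0$.

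The heart of the argument is the local inequality
$$\int_{K(z)} \Phi(|u|/2)\,dx \leq C_1\,|E_z|^{1/N}\left(\int_{K(z)} \Phi(|u|/2)\,dx + \int_{K(z)} \Phi(|\nabla u|)\,dx\right)$$
for every $z\in\mathbb{Z}^N$, with $C_1 = C_1(N,\Phi)$ independent of $z$ and $u$. To prove it, set $v := \Phi(|u|/2)$, which belongs to $W^{1,1}(K(z))$ because $\Phi \in C^1$, and invoke the classical Gagliardo--Nirenberg--Sobolev embedding $W^{1,1}(K(z))\hookrightarrow L^{N/(N-1)}(K(z))$. Since $v$ is supported on $E_z$, H\"older's inequality gives the key gain
$$\|v\|_{L^1(K(z))} = \|v\|_{L^1(E_z)} \leq |E_z|^{1/N}\,\|v\|_{L^{N/(N-1)}(K(z))}.$$
The chain rule yields $|\nabla v| = \Phi'(|u|/2)|\nabla u|/2$, and Young's inequality \eqref{Young} applied with $a = |\nabla u|$, $b = \Phi'(|u|/2)/2$, combined with the convexity of $\widetilde\Phi$ (so that $\widetilde\Phi(\Phi'(|u|/2)/2) \leq \widetilde\Phi(\Phi'(|u|/2))/2$) and the complementary-function estimate \eqref{DES2D}, produces a pointwise control of the form $|\nabla v| \leq \Phi(|\nabla u|) + C_2\,\Phi(|u|/2)$ that closes the local inequality.

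Summing the local inequality over $z\in\mathbb{Z}^N$ gives $A \leq C_1\,\alpha^{1/N}(A + M)$. Using $A + M \leq A(2+M/A)$, one rearranges to
$$\alpha \geq \frac{A^N}{C_1^N(A+M)^N} \geq \frac{1}{C_1^N\bigl(2+M/A\bigr)^N}.$$
Selecting $y_0 \in \mathbb{Z}^N$ with $|E_{y_0}| \geq \alpha/2$ then produces $(2+M/A)^N\,|E_{y_0}| \geq (2C_1^N)^{-1} =: C_0$, independent of $u$ and $y_0$, as required.

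The main obstacle is the chain-rule step in the local inequality, namely bounding $\widetilde\Phi(\Phi'(|u|/2)/2)$ by $C_2\,\Phi(|u|/2)$ when $\Phi$ is not assumed to satisfy the $\Delta_2$-condition: the naive estimate from \eqref{DES2D} only gives $\widetilde\Phi(\Phi'(|u|/2)) \leq \Phi(|u|)$, which introduces $\Phi(|u|)$ instead of $\Phi(|u|/2)$. Closing this gap calls for a careful rescaling inside Young's inequality, combined with the Legendre--Young identity $\widetilde\Phi(\Phi'(t)) = t\Phi'(t) - \Phi(t)$ and the convexity-based estimate $\Phi'(t/2) \leq 2\Phi(t)/t$; in the applications of interest here, where $\Phi$ satisfies $(\phi_3)$ and hence the $\Delta_2$-condition, this refinement is routine since $\Phi(|u|)$ and $\Phi(|u|/2)$ are then comparable.
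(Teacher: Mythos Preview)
Your argument is correct and shares the analytic core with the paper's proof: both set $v=\Phi(|u|/2)$, invoke the Sobolev embedding $W^{1,1}(K)\hookrightarrow L^{N/(N-1)}(K)$ on a unit cube, and extract the factor $|K\cap\operatorname{supp}(u)|^{1/N}$ via H\"older. The difference lies only in how the good cube is selected. You sum the local inequality over $z\in\mathbb{Z}^N$ and read off a lower bound on $\alpha=\sup_z|E_z|$. The paper instead picks $y_0\in\mathbb{R}^N$ directly by a one-line averaging argument: if
\[
\int_{K(y_0)}\Phi(|\nabla u|)\,dx \;\ge\; \Bigl(1+MA^{-1}\Bigr)\int_{K(y_0)}\Phi(|u/2|)\,dx
\]
held for every $y_0$, integrating in $y_0$ (Fubini, using $|K(y_0)|=1$) would yield $M\ge A+M$, a contradiction; a $y_0$ violating this therefore exists, and the Sobolev/H\"older step is applied only on that single cube. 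Your route is slightly more explicit and delivers $y_0\in\mathbb{Z}^N$; the paper's avoids the summation altogether. Both land on the same estimate up to an absolute constant.

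The obstacle you flag at the end---controlling $\widetilde\Phi\bigl(\Phi'(|u|/2)\bigr)$ by a multiple of $\Phi(|u/2|)$ rather than $\Phi(|u|)$---is genuine and is present in the paper's own proof as well: the passage from its gradient estimate (which produces $\tfrac12\int_{K(y_0)}\Phi(|u|)$) to the displayed bound $\bigl(2+MA^{-1}\bigr)\int_{K(y_0)}\Phi(|u/2|)$ tacitly uses $\Phi(|u|)\le C\,\Phi(|u/2|)$, i.e.\ the $\Delta_2$-condition. Under $(\phi_3)$ this is automatic, so the gap is harmless for the applications in the paper, exactly as you note.
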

\begin{proof} First of all we claim that there is $y_0 \in \mathbb{R}^N$ such that 
	\begin{equation} \label{0E0}
	\int_{\mathbb{R}^N}\Phi(|\nabla u|) \chi_{K(y_0)}\,dx < \left(1+M\left(\int_{\mathbb{R}^N}\Phi(|u/2|)\,dx\right)^{-1}\right)\int_{\mathbb{R}^N}\Phi(|u/2|)\chi_{K(y_0)}\,dx ,
	\end{equation}
	where $\chi_{K(y_0)}$ is the characteristic function associated with the set $K(y_0)$.  
	
	Otherwise, we must have 
	$$
	M \geq \int_{\mathbb{R}^N}\Phi(|\nabla u|)\,dx \geq \left(1+M\left(\int_{\mathbb{R}^N}\Phi(|u/2|)\,dx\right)^{-1}\right)\int_{\mathbb{R}^N}\Phi(|u/2|)\,dx > M,
	$$	
	which is impossible. 
	
	\begin{claim}$\Phi(|u/2|) \in W^{1,1}(K(y_0))$.
		
	\end{claim}
	
	Indeed, since $\Phi$ is increasing 
	\begin{equation} \label{0E1}
	\int_{K(y_0)}\Phi(|u/2|)\,dx \leq \int_{K(y_0)}\Phi(|u|)\,dx<+\infty.
	\end{equation}
	On the other hand, 
	$$
	\int_{K(y_0)}|\nabla \Phi(|u/2|)|\,dx=\frac{1}{2}\int_{K(y_0)}\Phi'(|u/2|){|\nabla u|}\,dx.
	$$
	By Young's inequality 
	$$
	\int_{K(y_0)}|\nabla \Phi(|u/2|)|\,dx \leq \frac{1}{2}\int_{K(y_0)}\Phi(|\nabla u|)\,dx+\frac{1}{2}\int_{K(y_0)}\widetilde{\Phi}(\Phi'(|u/2|))\,dx.
	$$
	Recalling that 
	$$
	\widetilde{\Phi}(\Phi'(t))\leq \Phi(2t), \quad \forall t >0,
	$$
	we get
	\begin{equation} \label{0E2}
	\int_{K(y_0)}|\nabla (\Phi(|u/2|))|\,dx \leq \frac{1}{2}\int_{K(y_0)}\Phi(|\nabla u|)\,dx+\frac{1}{2}\int_{K(y_0)}\Phi(|u|)\,dx.
	\end{equation} 
	The claim follows from (\ref{0E1}) and (\ref{0E2}). 
	
	Now, using the continuous Sobolev  embedding $W^{1,1}(K(y_0)) \hookrightarrow L^{1^*}(K(y_0))$ where  $1^*=\frac{N}{N-1}$, there is $C_1>0$ such that 
	$$
	C_1\|w\|_{L^{1^*}(K(y_0))}\,dx \leq \int_{K(y_0)}(|\nabla w|+|w|)\,dx, \quad \forall w \in W^{1,1}(K(y_0)).
	$$
	Hence
	\begin{equation} \label{00E2}
	C_1\left(\int_{K(y_0)}|\Phi(|u/2|)|^{1^*}\,dx\right)^{\frac{1}{1^*}} \leq \int_{K(y_0)}(|\nabla (\Phi(|u/2|))|+|\Phi(|u/2|)|)\,dx, \quad \forall u \in W^{1,1}(\mathbb{R}^N).
	\end{equation}
	From (\ref{0E0})-(\ref{00E2}),
	$$
	C_1\left(\int_{K(y_0)}|\Phi({u}/{2})|^{1^*}\,dx\right)^{\frac{1}{1^*}} \leq \left(2+M\left(\int_{\mathbb{R}^N}\Phi(|u/2|)\,dx\right)^{-1}\right)\int_{K(y_0)}\Phi(|u/2|)\, dx,
	$$	
	leading to 
	$$
	C_1\leq \left(2+M\left(\int_{\mathbb{R}^N}\Phi(|u/2|)\,dx\right)^{-1}\right)mes[K(y_0) \cap supp(u)]^{\frac{1}{N}},
	$$
	that is, 
	$$
	C_0\leq \left(2+M\left(\int_{\mathbb{R}^N}\Phi(|u/2|)\,dx\right)^{-1}\right)^{N}mes[K(y_0) \cap supp(u)].
	$$
\end{proof}

Now, we are ready to prove Theorem \ref{lieb}.

\begin{proof} To begin with, we will apply Lemma \ref{L1} for function $\left(|u_n|-\frac{\epsilon}{2}\right)^{+}$. Note that 
	$$
	\int_{\mathbb{R}^N}\Phi\left(\frac{1}{2}\left(|u_n|-\frac{\epsilon}{2}\right)^{+}\right)\,dx \geq \int_{|u_n|>\epsilon}\Phi\left(\frac{1}{2}\left(|u_n|-\frac{\epsilon}{2}\right)^{+}\right)\,dx \geq \Phi\left(\frac{\epsilon}{4}\right)mes[|u_n|>\epsilon] \geq \Phi\left(\frac{\epsilon}{4}\right)\delta,
	$$	
	from where it follows that
	$$
	\left(\int_{\mathbb{R}^N}\Phi\left(\frac{1}{2}\left(|u_n|-\frac{\epsilon}{2}\right)^{+}\right)\,dx\right)^{-1}\leq \frac{1}{\Phi\left(\frac{\epsilon}{4}\right)\delta}.
	$$
	Since 
	$$
	C_0\leq \left(2+M\left(\int_{\mathbb{R}^N}\Phi\left(\frac{1}{2}\left(|u_n|-\frac{\epsilon}{2}\right)^{+}\right)\,dx\right)^{-1}\right)^{N}mes \left[K(y_n) \cap supp\left(|u_n|-\frac{\epsilon}{2}\right)^{+}\right],	
	$$
	we get
	$$
	C_0\leq \left(2+M\frac{1}{\Phi\left(\frac{\epsilon}{4}\right)\delta}\right)^{N}mes \left[K(y_n) \cap supp\left(|u_n|-\frac{\epsilon}{2}\right)^{+}\right].
	$$
	On the other hand, as $supp\left(|u_n|-\frac{\epsilon}{2}\right)^{+}=[|u_n| \geq \frac{\epsilon}{2}]$, we derive that
	$$
	mes[K(y_n) \cap [|u_n|\geq \frac{\epsilon}{2}]] \geq C_2, \quad \forall n \in \mathbb{N},
	$$
	for some $C_2>0$. Now, using the fact that there is $z_n \in \mathbb{Z}^N$ such  that 
$$
K(y_n) \subset \tilde{K}(z_n)=\displaystyle \prod_{i=1}^{N}\left(z_{n,i}-\frac{3}{2},z_{n,i}+\frac{3}{2}\right),
$$
we obtain	
$$
	\int_{\tilde{K}(0)}\Phi(|v_n|)\,dx\geq \int_{K(y_n) \cap [|u_n|\geq \frac{\epsilon}{2}]}\Phi(|u_n|)\,dx \geq \Phi\left(\frac{\epsilon}{4}\right) mes[K(y_n) \cap [|u_n|\geq \frac{\epsilon}{2}]] 
	$$
	that is,
	$$
	\int_{\tilde{K}(0)}\Phi(|v_n|)\,dx\geq \Phi\left(\frac{\epsilon}{4}\right) C_2=C_3>0, \quad \forall n \in \mathbb{N}.
	$$
As $(v_n)$ is bounded, the compact embedding $W^{1,\Phi}(\mathbb{R}^N) \to L^{\Phi}(\tilde{K}(0))$ ensures that $v_n \to v$ in $L^{\Phi}(\tilde{K}(0))$ for some subsequence. Thus,  
	$$
	\int_{\tilde{K}(0)}\Phi(|v|)\,dx\geq C_3>0, 
	$$
	showing that $v \not= 0$, as asserted.
\end{proof}

\section{Technical results}

The energy functional $I: W^{1,\Phi}(\mathbb{R}^N) \to \mathbb{R} $ associated with $(P)$ given by
\begin{equation} \label{I}
	I(u)=\int_{\mathbb{R}^N}\Phi(|\nabla u|)dx+\int_{\mathbb{R}^N}V(x)\Phi(| u|)dx-\int_{\mathbb{R}^N}F(u) dx
\end{equation}
is well defined. The functional $\mathcal{F}: W^{1,\Phi}(\mathbb{R}^N) \to \mathbb{R} $ given by
$$
\mathcal{F}(u)=\int_{\mathbb{R}^N}F(u) dx
$$
belongs to $C^{1}( W^{1,\Phi}(\mathbb{R}^N),\mathbb{R})$ with
$$
\mathcal{F}'(u)v=\int_{\mathbb{R}^N}f(u)v \,dx, \quad \forall u,v \in  W^{1,\Phi}(\mathbb{R}^N).
$$

Related to the functional $Q: W^{1,\Phi}(\mathbb{R}^N) \to \mathbb{R} $ given by
\begin{equation} \label{Q}
	Q(u)=\int_{\mathbb{R}^N}\Phi(|\nabla u|)dx+\int_{\mathbb{R}^N}V(x)\Phi(| u|)dx,
\end{equation}
we know that it is strictly convex   and  l.s.c. with respect to the weak$^*$ topology. Moreover, $Q \in C^{1}(W^{1,\Phi}(\mathbb{R}^N),\mathbb{R})$ when $\Phi$ and $\widetilde\Phi$ satisfy the  $\Delta_2$-condition. Therefore, in our case when $l=1$, we cannot guarantee that $\widetilde\Phi$ satisfies the  $\Delta_2$-condition. From this, the functional $I$ is not $C^1$ when $\ell=1$.  

From the above commentaries, in the present paper we will use a minimax method developed by Szulkin  \cite{Szulkin}. In this sense, we will say that $u \in W^{1,\Phi}(\mathbb{R}^N)$ is a critical point for $I$ if $0 \in \partial I(u) = \partial Q(u) - \mathcal{F}'(u)$. Then $u \in  W^{1,\Phi}(\mathbb{R}^N)$ is a critical point of $I$ if, and only if, $\mathcal{F}'(u) \in \partial Q(u)$, what, since $Q$ is convex, is equivalent to
\begin{equation} \label{E1}
	Q(v)-Q(u)\geq \int_{\mathbb{R}^N}f(u)(v-u)\,dx, \quad \forall v \in   W^{1,\Phi}(\mathbb{R}^N).
\end{equation}
\begin{lemma} \label{solucao} If $u \in W^{1,\Phi}(\mathbb{R}^N)$ is a critical point of $I$ in $W^{1,\Phi}(\mathbb{R}^N)$, then $u$ is a weak solution to $(P)$.
	
\end{lemma}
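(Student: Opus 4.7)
The plan is to extract the weak-solution identity from the Szulkin subdifferential characterization (\ref{E1}) by varying the test function linearly. Given $u$ critical for $I$ and an arbitrary $w\in W^{1,\Phi}(\mathbb{R}^N)$, I would substitute $v=u+tw$ and $v=u-tw$ with $t>0$ into (\ref{E1}), divide by $t$, and let $t\to 0^+$. This yields the pair of inequalities
$$\frac{Q(u+tw)-Q(u)}{t}\geq \int_{\mathbb{R}^N} f(u)w\,dx \quad\text{and}\quad \frac{Q(u-tw)-Q(u)}{t}\geq -\int_{\mathbb{R}^N} f(u)w\,dx, \quad t>0.$$

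The central step will be to identify the right-hand limit with the formal Gateaux derivative
$$\mathcal{G}(w):=\int_{\mathbb{R}^N}\phi(|\nabla u|)\nabla u\cdot\nabla w\,dx+\int_{\mathbb{R}^N}V(x)\phi(|u|)u w\,dx.$$
For a.e.\ $x$ the scalar map $s\mapsto \Phi(|\nabla u(x)+s\nabla w(x)|)$ is convex (being the composition of the convex, non-decreasing function $\Phi\circ|\cdot|$ with an affine function), so its difference quotient at $s=0$ is non-increasing as $t\downarrow 0^+$ and converges pointwise to $\phi(|\nabla u|)\nabla u\cdot\nabla w$ (with value $0$ where $\nabla u=0$, which is consistent since $\Phi(s)/s\to 0$ as $s\to 0$). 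The analogous statement holds for the potential term $s\mapsto V(x)\Phi(|u(x)+sw(x)|)$. Because $(\phi_3)$ with $m<\infty$ forces $\Phi$ to satisfy the $\Delta_2$-condition, the difference quotient at $t=1$ is majorized by $K(\Phi(|\nabla u|)+\Phi(|\nabla w|))\in L^1(\mathbb{R}^N)$, and the pointwise limit is integrable because (\ref{DES2D}) together with $\Delta_2$ gives $\widetilde{\Phi}(\phi(|\nabla u|)|\nabla u|)\leq \Phi(2|\nabla u|)\leq K\Phi(|\nabla u|)$, so H\"older in the Orlicz setting yields $\phi(|\nabla u|)\nabla u\cdot\nabla w\in L^1(\mathbb{R}^N)$. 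Monotone (or dominated) convergence then delivers
$$\lim_{t\to 0^+}\frac{Q(u+tw)-Q(u)}{t}=\mathcal{G}(w).$$

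With this identification, the first inequality becomes $\mathcal{G}(w)\geq \int_{\mathbb{R}^N} f(u)w\,dx$, while the second, after noting that $\mathcal{G}$ is linear in $w$ and that the analogous limit with $-w$ in place of $w$ equals $\mathcal{G}(-w)=-\mathcal{G}(w)$, yields the reverse inequality $\mathcal{G}(w)\leq \int_{\mathbb{R}^N} f(u)w\,dx$. Since $w\in W^{1,\Phi}(\mathbb{R}^N)$ was arbitrary, equality holds and this is precisely the weak-solution identity stated before Theorem \ref{T1}. The main obstacle is the dominated convergence step: one must exhibit an integrable majorant for the difference quotient and verify integrability of the pointwise limit. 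Both rely on $\Delta_2$ for $\Phi$, which is supplied by $(\phi_3)$, but crucially not on $\Delta_2$ for $\widetilde\Phi$, consistently with the non-reflexive setting. The remainder of the argument is a direct manipulation of the Szulkin inequality.
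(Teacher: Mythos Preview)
Your proposal is correct and follows essentially the same route as the paper: exploit the $\Delta_2$-condition on $\Phi$ (which holds under $(\phi_3)$ regardless of $\widetilde\Phi$) to show that $Q$ is G\^ateaux differentiable with derivative $\mathcal{G}$, then insert $v=u\pm tw$ into the Szulkin inequality (\ref{E1}), divide by $t$, pass to the limit, and use $w\mapsto -w$ to turn the one-sided inequality into equality. The only cosmetic difference is that you justify the passage to the limit via monotonicity of the difference quotient (convexity of $\Phi$) together with an $L^1$ majorant at $t=1$, whereas the paper writes the quotient via the mean value theorem and applies Lebesgue dominated convergence directly; both are standard and yield the same conclusion.
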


\begin{proof} Since $\Phi$ satisfies the $\Delta_2$-condition, we claim that $Q$ is G\^ateaux differentiable, that is, $\frac{\partial Q(u)}{\partial v}$ exists for all  $u,v \in W^{1,\Phi}(\mathbb{R}^N)$ with 
\begin{equation} \label{Q}
\frac{\partial Q(u)}{\partial v}=\int_{\mathbb{R}^N}\phi(|\nabla u|)\nabla u \nabla v\,dx+\int_{\mathbb{R}^N}V(x)\phi(| u|)uv\,dx.
\end{equation} 	
Indeed, for each $v \in W^{1,\Phi}(\mathbb{R}^N)$ and $t\in [-1,1] \setminus \{0\}$,
$$
\Phi(|\nabla u +t\nabla v|)-\Phi(|\nabla u|)=t\phi(|\nabla u+st\nabla v|)(\nabla u+st\nabla v)\nabla v,
$$
for some $s \in (0,1)$. Consequently,
$$
\left| \frac{\Phi(|\nabla u +t\nabla v|)-\Phi(|\nabla u|)}{t}\right|=\phi(|\nabla u+st\nabla v|)|\nabla u+st\nabla v||\nabla v|.
$$
Since $\Phi$ satisfies the $\Delta_2$-condition, by Young inequality (\ref{Young}) and (\ref{DES2D}), there is $C>0$ such that 
$$
\phi(|\nabla u+st\nabla v|)|\nabla u+st\nabla v||\nabla v| \leq C\Phi(|\nabla u|+|\nabla v| ) +\Phi(|\nabla v|) \in L^{1}(\mathbb{R}^N)
$$
A similar argument works to show that 
$$
\phi(|u+st v|)| u+st v|| v| \leq C\Phi(|u|+| v| ) +\Phi(|v|) \in L^{1}(\mathbb{R}^N).
$$
Now, by using Lebesgue dominated convergence theorem,  we derive that 
$$
\lim_{t \to 0}\frac{Q(u+tv)-Q(u)}{t}=\int_{\mathbb{R}^N}(\phi(|\nabla u|)\nabla u \nabla v+V(x)\phi(|u|)u v)\,dx,
$$
showing (\ref{Q}). 

Recalling that the functional $\mathcal{F}: W^{1,\Phi}(\mathbb{R}^N) \to \mathbb{R} $ given by
$$
\mathcal{F}(u)=\int_{\mathbb{R}^N}F(u) dx
$$
belongs to $C^{1}( W^{1,\Phi}(\mathbb{R}^N),\mathbb{R})$ with 
$$
\mathcal{F}'(u)v=\int_{\mathbb{R}^N}f(u)v \,dx, \quad \forall u,v \in  W^{1,\Phi}(\mathbb{R}^N),
$$
it follows that $I$ is G\^ateaux differentiable with 
\begin{equation} \label{E1.0}
		\frac{\partial I(u)}{\partial v}=\frac{\partial Q(u)}{\partial v}-\mathcal{F}'(u)v, \quad \forall u,v \in  W^{1,\Phi}(\mathbb{R}^N),
\end{equation}
	or equivalently
	$$
	\frac{\partial I(u)}{\partial v}=\int_{\mathbb{R^N}}\phi(|\nabla u|)\nabla u \nabla v\,dx+	\int_{\mathbb{R}^N}V(x)\phi(| u|)u v\,dx-\int_{\mathbb{R}^N}f(u)v\,dx, \quad \forall u,v \in  W^{1,\Phi}(\mathbb{R}^N).
	$$

	As $u$ is a critical point of $I$, it follows that
	$$
	Q(w) - Q(u) \geq \int_{\R^N}f(u)(w-u)\,dx, \quad \forall v \in  W^{1,\Phi}(\mathbb{R}^N).
	$$	
	Thus, for each $v \in W^{1,\Phi}(\mathbb{R}^N)$ and $t>0 $,  
	$$
	\frac{Q(u+tv) - Q(u) }{t}\,dx  \geq \int_{\R^N}f(u)v\,dx.
	$$	
	Taking the limit when $t \to 0$, we get
	$$
\frac{\partial Q(u)}{\partial v} \geq \int_{\R^N}f(u)v\,dx, \quad \forall v \in W^{1,\Phi}(\mathbb{R}^N),
	$$
	or equivalently
	$$
	\int_{\R^N}\phi(|\nabla u|)\nabla u \nabla v  \, dx+ \int_{\R^N}V(x)\phi(|u|) u v  \, dx-\int_{\R^N}f(u)v\,dx \geq 0, \quad \forall v \in W^{1,\Phi}(\mathbb{R}^N).
	$$
The last inequality ensures that 
$$
\int_{\R^N}\phi(|\nabla u|)\nabla u \nabla v  \, dx + \int_{\R^N}V(x)\phi(|u|) u v  \, dx-\int_{\R^N}f(u)v\,dx = 0, \quad \forall v \in W^{1,\Phi}(\mathbb{R}^N), 
$$
finishing the proof. 
\end{proof}	

\begin{lemma}\label{passo} The functional $I$ satisfies the mountain pass geometry, that is, \\
	(i) There exist $\rho$, $\eta>0$, such that $I(u)\geq
	\eta$, if $\|u\|=\rho$. \\
	
	\noindent (ii) For any $ \psi \in C^{\infty}_{0}(\mathbb{R}^{N}) \setminus \{0\}$, $I(t \psi)\rightarrow - \infty$ as $t\mapsto +\infty$.
\end{lemma}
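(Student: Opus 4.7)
The plan for (i) is a modular comparison. Combining $(f_1)$ and $(f_2)$ with the continuity of $F$ and the $\Delta_2$-property of $\Phi$ yields the standard pointwise bound
$$
|F(t)|\leq \varepsilon\,\Phi(|t|)+C_\varepsilon\, B(|t|), \quad \forall\,t\in\mathbb{R},
$$
for any prescribed $\varepsilon>0$: near zero, $(f_1)$ gives $|f(t)|\leq \varepsilon\, t\phi(t)$, hence $|F(t)|\leq \varepsilon\Phi(|t|)$; near infinity, $(f_2)$ gives $|f(t)|\leq C|t|b(|t|)$, hence $|F(t)|\leq CB(|t|)$; on the intermediate bounded band, continuity plus $B(|t|)\geq B(\delta)>0$ closes the gap. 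Substituting into $I$ and using $V\geq V_0$, one gets
$$
I(u)\geq \int_{\mathbb{R}^N}\Phi(|\nabla u|)\,dx+(V_0-\varepsilon)\int_{\mathbb{R}^N}\Phi(|u|)\,dx-C_\varepsilon\int_{\mathbb{R}^N}B(|u|)\,dx.
$$
Fixing $\varepsilon<V_0/2$ and using Lemma \ref{F1} for $\|u\|\leq 1$ (so that $\|u\|_\Phi,\|\nabla u\|_\Phi\leq 1$ and $\xi_0(\cdot)=(\cdot)^m$), the first two terms are bounded below by $c\,(\|u\|_\Phi^m+\|\nabla u\|_\Phi^m)\geq c\,2^{1-m}\|u\|^m$. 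For the $B$-term I would invoke the continuous embedding $W^{1,\Phi}(\mathbb{R}^N)\hookrightarrow L^{B}(\mathbb{R}^N)$, available because the exponents of $B$ satisfy $m<b_1\leq b_2<l^*$, combined with the $B$-analog of Lemma \ref{F1}, to get $\int_{\mathbb{R}^N} B(|u|)\,dx\leq C\|u\|^{b_1}$ for $\|u\|$ small. Since $b_1>m$, the resulting inequality $I(u)\geq c_1\|u\|^m-c_2\|u\|^{b_1}$ furnishes the desired $\rho,\eta>0$.

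For (ii), fix $\psi\in C_0^\infty(\mathbb{R}^N)\setminus\{0\}$. A direct differentiation using $(f_3)$ shows that $s\mapsto F(s)/|s|^\theta$ is non-decreasing on $(0,+\infty)$ and on $(-\infty,0)$, so for every $t\geq 1$ and $\psi(x)\neq 0$ one has $F(t\psi(x))\geq t^\theta F(\psi(x))$. Since $(f_3)$ also forces $F>0$ off zero and $\psi\not\equiv 0$, the constant $c_\psi:=\int_{\mathbb{R}^N} F(\psi)\,dx$ is strictly positive. On the other hand, Lemma \ref{F1} applied with $\rho=t\geq 1$ (so $\xi_1(t)=t^m$) gives
$$
\int_{\mathbb{R}^N}\Phi(t|\nabla\psi|)\,dx+\int_{\mathbb{R}^N}V(x)\Phi(t|\psi|)\,dx\leq t^m\left(\int_{\mathbb{R}^N}\Phi(|\nabla\psi|)\,dx+\|V\|_\infty\int_{\mathbb{R}^N}\Phi(|\psi|)\,dx\right),
$$
where $\|V\|_\infty<\infty$ because $V$ is continuous and $\mathbb{Z}^N$-periodic, and the two modular integrals are finite because $\psi$ is smooth with compact support. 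Hence $I(t\psi)\leq M_\psi\, t^m-c_\psi\, t^\theta\to -\infty$ as $t\to +\infty$, because $\theta>m$.

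The main point requiring care is the continuous embedding $W^{1,\Phi}(\mathbb{R}^N)\hookrightarrow L^B(\mathbb{R}^N)$ used in (i): in the reflexive regime $l>1$ this is classical, but in the possibly nonreflexive case $l=1$ treated here one must appeal to a version of the Orlicz-Sobolev embedding that does not rely on the $\Delta_2$-condition for $\widetilde\Phi$, obtained e.g.\ via $W^{1,\Phi}(\mathbb{R}^N)\hookrightarrow L^{\Phi}(\mathbb{R}^N)\cap L^{\Phi_*}(\mathbb{R}^N)$ together with a Young/modular interpolation under the growth condition $(b_1)$. Everything else reduces to bookkeeping with the modular inequalities of Lemma \ref{F1}.
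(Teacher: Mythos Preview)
Your argument is correct and follows essentially the same route as the paper's: the same pointwise control $|F(t)|\leq \varepsilon\Phi(|t|)+C_\varepsilon B(|t|)$, the same modular lower bound via Lemma~\ref{F1} and the embedding $W^{1,\Phi}(\mathbb{R}^N)\hookrightarrow L^B(\mathbb{R}^N)$ for (i), and the same $t^m$-versus-$t^\theta$ comparison via Lemma~\ref{F1} and $(f_3)$ for (ii). The only cosmetic difference is that in (ii) you use the monotonicity $F(t\psi)\geq t^\theta F(\psi)$ directly, whereas the paper extracts the equivalent consequence $F(t)\geq C_5|t|^\theta-C_6$; both are standard readings of the Ambrosetti--Rabinowitz condition and yield the same estimate.
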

\begin{proof} \,\, \textbf{(i)} 
 From assumptions $(f_{1})-(f_{3})$, given $\epsilon >0$, there exists $C_\epsilon>0$ such that 
\begin{eqnarray}\label{crtescimentoF}
	0\leq F(t)\leq \displaystyle\frac{\epsilon m}{\theta} \Phi(|t|)+ {C_\epsilon} B(|t|)\,\,\,\,\,\,\,\, \forall t \in \mathbb{R}.
\end{eqnarray}
Thus, 
$$
I(u) \geq \displaystyle \int_{\mathbb{R}^N}\Phi(|\nabla u|)\,dx +\left(V_0 - \frac{\epsilon m}{\theta}\right)
\displaystyle \int_{\mathbb{R}^N}\Phi(|u|)\,dx
- C_\epsilon\displaystyle \int_{\mathbb{R}^N}B(|u|)\,dx
$$
Hence, for $\epsilon$ small enough, 
$$
J(u) \geq C_{1} \displaystyle \left(\xi_{0}(\|\nabla u\|_{\Phi}) +\xi_{0}(\|u\|_{\Phi}) \right)
- C_{2}\xi_{3}(\|u\|_{B}).
$$
Choosing $\rho >0$  such that
$$
\|u\|=\|\nabla u\|_{\Phi} + \|u\|_{\Phi} = \rho <1  \ \ \mbox{and} \ \ \|u\|_{B}\leq C(\|\nabla u \|_{\Phi}+ \|u\|_{\Phi}) <\rho <1,
$$
we obtain
$$
I(u) \geq C_{1} \displaystyle (\|\nabla u\|^{m}_{\Phi} +\|u\|^{m}_{\Phi})
- C_{2}\|u\|^{b_1}_{B},
$$
which yields 
$$
I(u) \geq C_{3}\|u\|^{m} - C_4 \|u\|^{b_1},
$$
for some positive constants $C_3$ and $C_4$.  Since $0<m<b_1$, there exists $\eta >0$ such that 
$$ 
I(u)\geq \eta \,\,\, \mbox{ for all} \,\,\, \|u\|=\rho. 
$$
\noindent \textbf{(ii)}  From $(f_{3})$, there exist $C_{5}, C_{6}>0$ such that
$$
F(t)\geq C_{5}|t|^{\theta} - C_{6}, \ \ \mbox{for all} \ \ t \in \mathbb{R}.
$$
Fixing $ \psi \in C^{\infty}_{0}(\mathbb{R}^{N}) \setminus \{0\}$, the last inequality leads to 
$$
I(t \psi) \leq \xi_{1}(t) ( \xi_{1}(\| \nabla \psi \|_{\Phi}) + \|V\|_\infty\xi_{1}(\|  \psi \|_{\Phi}))- C_{5}t^{\theta}\displaystyle\int_{\mathbb{R}^{N}}|\psi|^{\theta}\,dx + C_{6} \mbox{supp} \, \psi.
$$
Thus, for $t$ sufficient large,
$$
I(t \phi) \leq t^{m}( \xi_{1}(\| \nabla \psi \|_{\Phi}) + \|V\|_\infty\xi_{1}(\|  \psi \|_{\Phi}))- C_{5}t^{\theta}\displaystyle\int_{\mathbb{R}^{N}}|\psi|^{\theta}\,dx + C_{6} \mbox{supp}\, \psi.
$$
Since $ m < \theta$, the result follows. \end{proof}

The last lemma permits to apply a version of the Mountain Pass Theorem found in \cite[Theorem 3.1]{AlvesdeMorais} to guarantee the existence of a $(PS)$ sequence $(v_n) \subset W^{1,\Phi}(\mathbb{R}^N)$ for $I$ associated with the mountain pass level of $I$, denoted by $d$, that is, $I(v_n)\to d>0 $ and there is  $\tau_n\to 0$ in $\R$ such that
\begin{equation} \label{sequencia2}
	Q(w)-Q(v_n) \geq \int_{\mathbb{R}^N}f(v_n)(w-v_n)\,dx- \tau_n\|w-v_n\|,
\end{equation}
for all $w \in W^{1,\Phi}(\mathbb{R}^N)$ and  $n \in\mathbb{N}$.

\begin{lemma}\label{doisitens}
Any $(PS)$ sequence for $I$ is bounded. 
\end{lemma}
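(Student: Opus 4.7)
The plan is to run the classical Ambrosetti--Rabinowitz boundedness argument, adapted to Szulkin's nonsmooth setting by working directly with the $(PS)$-inequality (\ref{sequencia2}) rather than with a smooth derivative of $I$.

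\textbf{Step 1 (Two-sided gradient estimate).} In (\ref{sequencia2}), insert $w = v_n + t\varphi$ with $t > 0$, divide by $t$, and let $t \to 0^+$. The Gâteaux differentiability of $Q$ established in Lemma \ref{solucao} (together with the explicit linear-in-direction formula proved there) yields
\[
\frac{\partial Q(v_n)}{\partial \varphi} \geq \int_{\mathbb{R}^N} f(v_n)\varphi\,dx - \tau_n \|\varphi\|.
\]
Applying this inequality with $-\varphi$ in place of $\varphi$ and using linearity of $\partial Q(v_n)/\partial\,\cdot\,$ gives the reverse bound, so that
\[
\left|\frac{\partial Q(v_n)}{\partial \varphi} - \int_{\mathbb{R}^N} f(v_n)\varphi\,dx\right| \leq \tau_n \|\varphi\|, \quad \forall\,\varphi \in W^{1,\Phi}(\mathbb{R}^N).
\]

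\textbf{Step 2 (Use of $(\phi_3)$ and $(f_3)$).} Testing with $\varphi = v_n$ and bounding $\partial Q(v_n)/\partial v_n \leq m\,Q(v_n)$ via $(\phi_3)$, together with the Ambrosetti--Rabinowitz condition $(f_3)$ ($\theta F \leq ft$), I obtain
\[
\theta\,I(v_n) \geq \theta\,Q(v_n) - \int_{\mathbb{R}^N} f(v_n)v_n\,dx \geq (\theta - m)\,Q(v_n) - \tau_n \|v_n\|.
\]
Since $\theta > m$ and $(I(v_n))$ is bounded, this yields $Q(v_n) \leq C_1 + C_2\tau_n\|v_n\|$.

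\textbf{Step 3 (Modular-to-norm bound).} Using $V \geq V_0$ and Lemma \ref{F1},
\[
Q(v_n) \geq \xi_0(\|\nabla v_n\|_\Phi) + V_0\,\xi_0(\|v_n\|_\Phi).
\]
Argue by contradiction: suppose $\|v_n\| \to \infty$. Since $l \geq 1$, whenever the Luxemburg norm of $\nabla v_n$ or of $v_n$ exceeds $1$, $\xi_0$ coincides with the $l$-th power and dominates that norm linearly from below. A short case analysis then gives
\[
Q(v_n) \geq \min(1,V_0)\,\|v_n\| - (1 + V_0),
\]
and combining with Step 2,
\[
\bigl(\min(1,V_0) - C_2 \tau_n\bigr)\|v_n\| \leq C_1 + 1 + V_0.
\]
As $\tau_n \to 0$, the coefficient on the left is eventually at least $\tfrac{1}{2}\min(1,V_0) > 0$, forcing $(\|v_n\|)$ bounded and yielding a contradiction.

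\textbf{Main obstacle.} The delicate point is the borderline case $l = 1$, where the coercivity of $\Phi$ on Luxemburg balls of radius $\geq 1$ is only linear in the norm. Here the argument depends crucially on the smallness $\tau_n \to 0$ to dominate the multiplicative factor in front of $\|v_n\|$ produced by the Szulkin error term. When $l > 1$ the lower bound for $Q$ is genuinely superlinear and the contradiction is immediate, so the analysis is routine; the content of the lemma lies in the $l = 1$ regime where $\widetilde{\Phi}$ need not satisfy the $\Delta_2$-condition.
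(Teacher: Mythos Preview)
Your proof is correct and follows essentially the same approach as the paper's own argument: both derive the two-sided estimate $\bigl|\partial Q(v_n)/\partial \varphi - \int f(v_n)\varphi\,dx\bigr|\le \tau_n\|\varphi\|$ from the Szulkin inequality via G\^ateaux differentiability of $Q$, test with $\varphi=v_n$, combine $(\phi_3)$ with the Ambrosetti--Rabinowitz condition $(f_3)$ to control $Q(v_n)$ by $C+o_n(1)\|v_n\|$, and then invoke Lemma~\ref{F1} to pass from modular to norm. Your treatment of Step~3 is in fact a bit cleaner than the paper's: rather than splitting into three subcases according to which of $\|\nabla v_n\|_\Phi$, $\|v_n\|_\Phi$ diverges, you use the uniform lower bound $\xi_0(s)\ge s-1$ (valid for all $s\ge 0$ since $l\ge 1$) to get a single linear estimate, and you make explicit that the borderline case $l=1$ is exactly where the smallness $\tau_n\to 0$ is genuinely needed rather than merely convenient.
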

\begin{proof} 
First of all, we recall that $(u_n)$ is a $(PS)$ sequence for $I$ when there are $c \in \mathbb{R}$ and $\tau_n\to 0$ in $\R$ such that $I(u_n)\to c $ and  
\begin{equation*}
	Q(v)-Q(u_n) \geq \int_{\mathbb{R}^N}f(u_n)(v-u_n)\,dx- \tau_n\|v-u_n\|,
\end{equation*}
for all $v \in W^{1,\Phi}(\mathbb{R}^N)$ and $n \in\mathbb{N}$. As $Q$ is G\^ateaux differentiable, it follows that \linebreak  $\frac{\partial I(u_n)}{\partial u_n}=o_n(1)\|u_n\|$, that is, 
$$
\int_{\mathbb{R^N}}\phi(|\nabla u_n|)|\nabla u_n|^2\,dx+	\int_{\mathbb{R}^N}V(x)\phi(|u_n|)|u_n|^2 \,dx-\int_{\mathbb{R}^N}f(u_n)u_n\,dx=o_n(1)\|u_n\|, \quad \forall n \in \mathbb{N}.
$$	
Hence, there is $C>0$ such that
\begin{eqnarray*}
	C+o_n(1)\|u_{n}\|\geq  I(u_{n})-\frac{1}{\theta}\frac{\partial I(u_n)}{\partial u_n}, \,\,\, \forall n \in \mathbb{N}.
\end{eqnarray*}
From $(f_{3})$,
\begin{eqnarray*}
	C+o_n(1)\|u_{n}\|&\geq &
\min\{1,V_0\}\left(\frac{\theta-m}{\theta}\right)\displaystyle\int_{\mathbb{R}^{N}}(\Phi(|\nabla u_{n}|)+\Phi(|u_{n}|))\,dx \\
	&\geq & \min\{1,V_0\}\left(\frac{\theta-m}{\theta}\right)\biggl[\xi_{0}(\|\nabla u_{n}\|_{\Phi})+ \xi_{0}(\|u_{n}\|_{\Phi})\biggl].
\end{eqnarray*}

\noindent Seeking for a contradiction, we assume that for some subsequence, $\|u_{n}\|\rightarrow +\infty$. This way, we need to study the following situations: \\

\noindent a) $\|\nabla u_{n}\|_{\Phi}\rightarrow +\infty$ and $\|u_{n}\|_{\Phi}\rightarrow +\infty$, \\

\noindent b) $\|\nabla u_{n}\|_{\Phi}\rightarrow +\infty$ and $\|u_{n}\|_{\Phi}$ is bounded, \\

\noindent and \\

\noindent c) $\|\nabla u_{n}\|_{\Phi}$ is bounded and $\|u_{n}\|_{\Phi}\rightarrow +\infty$. \\

In the first case, the Lemma \ref{F1} implies that 
\begin{eqnarray*}
	C+o_n(1)\|u_{n}\| \geq  C_{1}\biggl[\|\nabla u_{n}\|^{l}_{\Phi}+  \|u_{n}\|^{l}_{\Phi}\biggl]\geq C_{2}\|u_{n}\|^{l}.
\end{eqnarray*}
for $n$ large enough, which is absurd, because $l\geq 1$ and $o_n(1) \to 0$.\\

In case b), we have for $n$ large enough
\begin{eqnarray*}
	C_3+o_n(1)\|\nabla u_{n}\|_{\Phi} \geq  C_{2}\|\nabla u_{n}\|^{l}_{\Phi},
\end{eqnarray*}
which is absurd. The last case is similar to the case b). 
\end{proof}

\begin{lemma} If $(u_n)$ is a $(PS)$ sequence for $I$ with $	u_n \stackrel{*}{\rightharpoonup} u \quad \mbox{in} \quad W^{1,\Phi}(\mathbb{R}^N) $, then there is a subsequence of $(u_n)$, still denoted by itself, such that
$$
u_n(x) \to u(x) \quad \mbox{and} \quad \nabla u_n(x) \to \nabla u(x) \quad \mbox{a.e. in} \quad \mathbb{R}^N. 
$$	
\end{lemma}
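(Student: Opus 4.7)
My strategy is to treat the two conclusions separately. The a.e.~convergence of $u_n$ is the easy part, obtained from local compactness; a.e.~convergence of $\nabla u_n$ requires a monotonicity plus test-function argument, the main subtlety being the possible failure of $\widetilde\Phi$ to satisfy the $\Delta_2$-condition.

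For $u_n\to u$ a.e.: Lemma \ref{doisitens} gives that $(u_n)$ is bounded in $W^{1,\Phi}(\R^N)$. The standard compact embedding $W^{1,\Phi}(B_R)\hookrightarrow L^{\Phi}(B_R)$ on each ball, combined with a diagonal extraction over $R\in\N$, produces a subsequence with $u_n\to u$ in $L^{\Phi}_{\mathrm{loc}}(\R^N)$, and a further extraction yields $u_n(x)\to u(x)$ a.e.

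For $\nabla u_n\to\nabla u$ a.e., the plan is to prove, for every $\psi\in C_0^{\infty}(\R^N)$ with $0\leq\psi\leq 1$,
\[
\mathcal{M}_n(\psi):=\int\psi\,\bigl(\phi(|\nabla u_n|)\nabla u_n-\phi(|\nabla u|)\nabla u\bigr)\!\cdot\!(\nabla u_n-\nabla u)\,dx\longrightarrow 0,
\]
whose integrand is nonnegative by the strict monotonicity of $\xi\mapsto\phi(|\xi|)\xi$ following from $(\phi_1)$. A standard Boccardo--Murat-type argument then yields $\nabla u_n\to\nabla u$ a.e.~on $\{\psi>0\}$, and exhausting $\R^N$ by such cutoffs together with a diagonal extraction concludes. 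To produce $\mathcal{M}_n(\psi)\to 0$, I test the Szulkin PS inequality against $w=\psi(u_n-u)\in W^{1,\Phi}(\R^N)$. Since $Q$ is G\^ateaux differentiable (as shown in Lemma \ref{solucao}, using $\Delta_2$ on $\Phi$), applying the Szulkin inequality in the two directions $\pm w$ gives
\[
\Bigl|\int\phi(|\nabla u_n|)\nabla u_n\!\cdot\!\nabla w\,dx+\int V(x)\phi(|u_n|)u_n w\,dx-\int f(u_n)w\,dx\Bigr|\leq\tau_n\|w\|.
\]
H\"older's inequality together with the $L^{\Phi}_{\mathrm{loc}}$ convergence of $u_n$, the bound $\widetilde\Phi(\phi(t)t)\leq K\Phi(t)$ obtained from \eqref{DES2D} and $\Delta_2$ on $\Phi$, and the growth conditions $(f_1),(f_2)$ make all lower-order integrals as well as the boundary term $\int(u_n-u)\phi(|\nabla u_n|)\nabla u_n\!\cdot\!\nabla\psi\,dx$ vanish, reducing $\mathcal{M}_n(\psi)\to 0$ to showing
\[
J_n:=\int\psi\,\phi(|\nabla u|)\nabla u\!\cdot\!\nabla(u_n-u)\,dx\longrightarrow 0.
\]

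The convergence $J_n\to 0$ is the \emph{main obstacle}: the natural test function $g:=\psi\phi(|\nabla u|)\nabla u$ lies in $L^{\widetilde\Phi}$ but need not belong to $E^{\widetilde\Phi}$ when $\widetilde\Phi$ violates $\Delta_2$, so Lemma \ref{Estrela} does not apply directly. To circumvent this I truncate the gradient: $g_M:=g\,\chi_{\{|\nabla u|\leq M\}}$ is bounded with compact support, hence in $E^{\widetilde\Phi}$, so Lemma \ref{Estrela} yields $\int g_M\!\cdot\!\nabla(u_n-u)\,dx\to 0$ for each fixed $M$. For the tail $g-g_M$, a scaled Young inequality combined with $\widetilde\Phi(\phi(t)t)\leq K\Phi(t)$ produces the bound
\[
\Bigl|\int(g-g_M)\!\cdot\!\nabla(u_n-u)\,dx\Bigr|\leq C\int_{\{|\nabla u|>M\}\cap\mathrm{supp}\,\psi}\Phi(|\nabla u|)\,dx+\text{(remainder)},
\]
where the remainder is controlled uniformly in $n$ by replacing, if necessary, the test function $\psi(u_n-u)$ with $\psi T_k(u_n-u)$, $T_k$ the standard truncation, which confines the analysis to the region $\{|u_n-u|\leq k\}$ and restores uniform integrability of $\Phi(|\nabla u_n-\nabla u|)$ there. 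The first integral vanishes as $M\to\infty$ by absolute continuity of $\int\Phi(|\nabla u|)\,dx$. Passing to the limits $n\to\infty$, then $M\to\infty$, then $k\to\infty$, gives $J_n\to 0$ and hence $\mathcal{M}_n(\psi)\to 0$, completing the proof.
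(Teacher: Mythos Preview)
Your overall strategy coincides with the paper's: both use a compactly supported cutoff $\psi$ (the paper writes $\xi=\xi_R$), insert $\psi(u_n-u)$ into the $(PS)$ relation via the G\^ateaux differentiability of $Q$ (which only needs $\Delta_2$ on $\Phi$), and invoke the strict monotonicity of $\xi\mapsto\phi(|\xi|)\xi$. The paper then simply appeals to \cite[Lemma~4.3]{AGJ} for the conclusion $\nabla u_n\to\nabla u$ a.e., whereas you try to supply the details---and you are right that the point where the nonreflexivity bites is precisely the cross term $J_n=\int\psi\,\phi(|\nabla u|)\nabla u\cdot\nabla(u_n-u)\,dx$. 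Your observation that $g=\psi\,\phi(|\nabla u|)\nabla u$ need not lie in $E^{\widetilde\Phi}$ when $\widetilde\Phi$ fails $\Delta_2$ is correct (for $\Phi(t)=|t|\log(1+|t|)$ one has $\widetilde\Phi(c\,\Phi'(t))\sim t^{c}$ while $\Phi(t)\sim t\log t$, so $\int\widetilde\Phi(c|g|)=+\infty$ for $c>1$), and the truncation $g_M=g\,\chi_{\{|\nabla u|\le M\}}\in E^{\widetilde\Phi}$ correctly disposes of that piece via Lemma~\ref{Estrela}.

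The gap is in your control of the tail. You assert that replacing the test function by $\psi\,T_k(u_n-u)$ ``restores uniform integrability of $\Phi(|\nabla u_n-\nabla u|)$'' on $\{|u_n-u|\le k\}$. This is not so: truncating $u_n-u$ at height $k$ constrains only its \emph{values}; on $\{|u_n-u|<k\}$ one still has $\nabla T_k(u_n-u)=\nabla u_n-\nabla u$, and nothing prevents $\bigl(\Phi(|\nabla u_n|)\bigr)_n$ from concentrating on small subsets of $\{|\nabla u|>M\}\cap\operatorname{supp}\psi$. Hence the remainder
\[
\int_{\{|\nabla u|>M\}\cap\operatorname{supp}\psi}\Phi(|\nabla u_n-\nabla u|)\,dx
\]
need not be small uniformly in $n$ as $M\to\infty$, and the triple limit $n\to\infty$, $M\to\infty$, $k\to\infty$ does not close as written. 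A way to repair this is the exponent-$\theta$ device: since $e_n\ge 0$ and $\int_K e_n$ is merely bounded, one shows instead $\int_K e_n^{\theta}\to 0$ for some $\theta\in(0,1)$ by splitting $K$ into $\{|u_n-u|\le k\}$ (handled by $T_k$ and H\"older, giving a contribution $O(k^{\theta})$) and $\{|u_n-u|>k\}$ (whose measure tends to $0$, while $(\int e_n)^{\theta}$ stays bounded); $e_n^{\theta}\to 0$ in $L^1_{\mathrm{loc}}$ then yields $\nabla u_n\to\nabla u$ a.e.\ by strict monotonicity. Alternatively, use $e_n\ge 0$ together with convexity to obtain $\limsup_n\int\psi\,\Phi(|\nabla u_n|)\le\int\psi\,\Phi(|\nabla u|)$ and combine with weak$^*$ lower semicontinuity.
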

\begin{proof} By Sobolev embedding, the embedding $W^{1,\Phi}(\mathbb{R}^N) \hookrightarrow L^{1}(B_R(0))$ is compact for every $R>0$. Thus, for some subsequence of $(u_n)$, still denoted by itself,
$$
u_n(x) \to u(x) \quad \mbox{a.e. in} \quad \mathbb{R}^N. 
$$		
Given $R>0$, let us consider $\xi=\xi_R\in
C_0^\infty(\mathbb{R}^N)$ satisfying 
$$
0\leq\xi\leq 1, \xi \equiv 1 \,\,\, \mbox{in} \,\,\, B_R(0) \,\,\, \mbox{and} \,\,\, \text{supp}(\xi)\subset B_{2R}(0).
$$
Therefore, 
\begin{eqnarray}\label{lim0.1}
	0&\leq& \int_{B_R(0)}\left(\phi(\mid\nabla u_n\mid)\nabla u_n-\phi(\mid
	\nabla u\mid)\nabla u\right)(\nabla u_n-\nabla u)\,dx+\nonumber\\
	&&  \int_{B_R(0)}V(x)\left(\phi(\mid u_n\mid)u_n-\phi(\mid
	 u\mid) u\right)( u_n- u)\,dx\nonumber \\
	& \leq & \int_{B_{2R}(0)}\left( \phi(\mid \nabla u_n\mid)\nabla
	u_n-\phi(\mid\nabla u\mid)\nabla u\right)(\nabla u_n-\nabla u)\xi\,dx+\nonumber\\
	&&\int_{B_{2R}(0)}V(x)\left( \phi(\mid  u_n\mid)
	u_n-\phi(\mid u\mid)u\right)(u_n-u)\xi\,dx \nonumber\\
	&=&\int_{B_{2R}(0)}\phi(\mid \nabla  u_n\mid) \nabla u_n( \nabla u_n-
\nabla 	u)\xi\,dx-\int_{B_{2R}(0)}\phi(\mid  \nabla u\mid) \nabla u(\nabla  u_n-
\nabla	u)\xi\,dx + \nonumber \\
	&&\int_{B_{2R}(0)}V(x)\left( \phi(\mid  u_n\mid)
	u_n-\phi(\mid u\mid)u\right)(u_n-u)\xi\,dx. \nonumber
\end{eqnarray}	

As $\Phi$ satisfies the $\Delta_2$-condition and $(u_n)$ is bounded, it follows that $(\xi(u_n-u))$ is a bounded sequence in $W^{1,\Phi}(\mathbb{R}^N)$. Thereby, $\frac{\partial I(u_n)}{\partial (\xi(u_n-u)) }=o_n(1)$, that is, 
$$
\int_{\mathbb{R}^N}\phi(|\nabla u_n|)\nabla u_n \nabla (\xi(u_n-u))\,dx+	\int_{\mathbb{R}^N}V(x)\phi(| u_n|)u_n (\xi(u_n-u))\,dx-\int_{\mathbb{R}^N}f(u_n)(\xi(u_n-u))\,dx=o_n(1).
$$
Now, it is enough to repeat the same argument explored in \cite[Lemma 4.3]{AGJ} to conclude that for some subsequence
$$
\nabla u_n(x) \to \nabla u(x) \quad \mbox{a.e. in} \quad \mathbb{R}^N. 
$$
	
\end{proof}	

\section{ Proof of Theorem \ref{T1}}

By the previous section there exists a  $(PS)_d$ sequence $(u_n) \subset W^{1,\Phi}(\mathbb{R}^N)$ for $I$, where $d$ is the mountain pass level. Since $(u_n)$ is bounded, we can assume that for some subsequence, there is $u \in L_{loc}^{\Phi}(\mathbb{R}^N)$ such that $u_n \to u$ in $L_{loc}^{\Phi}(\mathbb{R}^N)$. By Corollary \ref{Corolario0}, we derive that $u \in W^{1,\Phi}(\mathbb{R}^N)$. 

Now, recalling that $\left(\displaystyle \int_{\R^N}\phi(|\nabla u_n|)|\nabla u_n|^{2}\,dx\right), \left(\displaystyle \int_{\R^N}\phi(|u_n|)|u_n|^{2}\,dx\right),\left(\displaystyle \int_{\R^N}\Phi(|\nabla u_n|)\,dx\right) $ and $\left(\displaystyle \int_{\R^N}\Phi(|u_n|)\,dx\right)$ are bounded, the identity 
(\ref{DES2D}) ensures that $(\phi(|\nabla u_n|)|\nabla u_n|)$ and $(\phi(|u_n|)|u_n|)$ are bounded sequences in $L^{\tilde{\Phi}}(\mathbb{R}^N)$. Gathering these information, we can apply the Lemma \ref{pontual} with $\Phi$ replaced by $\widetilde{\Phi}$ to obtain  
$$
\int_{\mathbb{R}^N}(\phi(|\nabla u_n|)\nabla u_n \nabla v+V(x)\phi(|u_n|) u_n  v)\,dx \to \int_{\mathbb{R}^N}(\phi(|\nabla u|)\nabla u \nabla v+V(x)\phi(|u|) u  v)\,dx, \quad \forall v \in C_0^{\infty}(\R^N).
$$ 

Now, using the fact that the embeedings of $W^{1,\Phi}(\mathbb{R}^N)$ into $L^{{B}}(B_R(0))$ and $L^{{\Phi}}(B_R(0))$ are compact for every $R>0$, we have  
$$
\int_{\mathbb{R}^N}f(u_n)v\,dx \to \int_{\mathbb{R}^N}f(u)v\,dx, \quad \forall v \in C_0^{\infty}(\R^N).
$$
The last two limits yield 
$$
\int_{\mathbb{R}^N}\phi(|\nabla u|)\nabla u \nabla v\,dx+\int_{\mathbb{R}^N}V(x)\phi(|u|)u v \, dx=\int_{\mathbb{R}^N}f(u)v\,dx, \quad \forall v \in C_0^{\infty}(\R^N).
$$
Now,  the fact that $\phi(|\nabla u|)|\nabla u|, \phi(|u|)|u| \in L^{\widetilde{\Phi}}(\mathbb{R}^N)$ and $f(u) \in L^{\Phi}(\mathbb{R}^N)+L^{B}(\mathbb{R}^N)$ together with the fact that $\overline{C_0^{\infty}(\mathbb{R}^N)}^{\|\,\,\|}=W^{1,\Phi}(\mathbb{R}^N)$ give  
$$
\int_{\mathbb{R}^N}\phi(|\nabla u|)\nabla u \nabla v\,dx+\int_{\mathbb{R}^N}V(x)\phi(|u|)u v \, dx= \int_{\mathbb{R}^N}f(u)v\,dx, \quad \forall v \in W^{1,\Phi}(\mathbb{R}^N),
$$
that is, $u$ is a critical point of $I$ in $W^{1,\Phi}(\mathbb{R}^N)$, and so, $u$ is a weak solution of $(P)$. 

In this point we have the following question: Is $u$ nontrivial? If the answer is yes, we have finished the proof of Theorem \ref{T1}. Otherwise, we must work more a little, and in this case, Theorems  \ref{Lions1} and \ref{lieb} are crucial in our approach. Indeed, we claim that the sequence $(u_n)$ does not satisfy the condition $(*)$ in Theorem \ref{Lions1}, otherwise we must have the convergence
$$
\int_{\mathbb{R}^N}B(|u_n|)\,dx \to 0 \quad \mbox{as} \quad n \to +\infty
$$
that together with $(f_1)$ and $(\phi_3)$ yields 
$$
\int_{\mathbb{R}^N}f(u_n)u_n\,dx \to 0 \quad \mbox{as} \quad n \to +\infty.
$$
Since 
$$
\int_{\mathbb{R}^N}\phi(|\nabla u_n|)|\nabla u_n|^{2}\,dx+\int_{\mathbb{R}^N}V(x)\phi(|u_n|)|u_n|^2 dx=\int_{\mathbb{R}^N}f(u_n)u_n\,dx+o_n(1), 
$$ 
it follows that 
$$
\int_{\mathbb{R}^N}\phi(|\nabla u_n|)|\nabla u_n|^{2}\,dx+\int_{\mathbb{R}^N}V(x)\phi(|u_n|)|u_n|^2 dx \to 0. 
$$
The last limit combines with $(\phi_3)$ to give  
$$
\int_{\mathbb{R}^N}(\Phi(|\nabla u_n|)+\Phi(|u_n|))\,dx \to 0, 
$$
and so, by $\Delta_2$-condition,  
$$
u_n \to 0 \quad \mbox{in} \quad W^{1,\Phi}(\mathbb{R}^N),
$$
which is absurd, because $I(u_n) \to d>0$, see (\ref{sequencia2}).   

From this, there are $\epsilon, \delta>0$ such that 
$$
mes([|u_n|>\epsilon])\geq \delta, \quad \forall n \in \mathbb{N}. 
$$
By Theorem \ref{lieb}, there is $(z_n) \subset \mathbb{Z}^N$ such that $w_n(x)=u_n(x+z_n)$ has a nontrivial limit $w \in L_{loc}^{\Phi}(\mathbb{R}^N)$. Therefore, by Corollary  \ref{Corolario0},  $w \in W^{1,\Phi}(\mathbb{R}^N)$. Moreover, fixed $v \in W^{1,\Phi}(\mathbb{R}^N)$, we have 
$$ 
\int_{\mathbb{R}^N}\phi(|\nabla w_n|)\nabla w_n \nabla v\,dx+\int_{\mathbb{R}^N}\phi(|w_n|) w_n v\,dx= \int_{\mathbb{R}^N}f(w_n)v\,dx+o_n(1).
$$
Arguing as above, we conclude that 
$$ 
\int_{\mathbb{R}^N}\phi(|\nabla w|)\nabla w \nabla v\,dx+\int_{\mathbb{R}^N}\phi(|w|) w v\,dx= \int_{\mathbb{R}^N}f(w)v\,dx, \quad \forall v \in W^{1,\Phi}(\mathbb{R}^N),
$$
showing that $w$ is a nontrivial weak solution to $(P)$.


\begin{thebibliography}{20}

\bibitem{Adams} {\sc A. Adams and J.F. Fournier}, \emph{Sobolev Spaces}, Academic Press (2003).

\bibitem{AEM} {\sc C.O. Alves, E. D. Silva  and M. T. O. Pimenta}, \emph{Existence of solution for a class of quasilinear elliptic problem without $\Delta_2$-condition},  Analysis and Applications 17 (2019), 665-688 

\bibitem{AGJ}{\sc C.O. Alves and G.M. Figueiredo and J. A. Santos}, \emph{Strauss and Lions type results for a class of
Orlicz-Sobolev spaces and applications}. Topol. Methods Nonlinear Anal. 44 (2014), 435-456

\bibitem{AlvesdeMorais} {\sc C.O. Alves, D. C. de Morais Filho},  \emph{Existence of concentration of positive solutions for a Schr\"odinger logarithmic equation}, Z Angew Math Phys, 2018, 69:144


\bibitem{ADOM} {\sc C.O. Alves, J.M. do \'O and O.H. Miyagaki}, \emph{On perturbations of a class of a periodic m-Laplacian equation with critical growth}, Nonl. Anal.  45 (2001) 849 – 863



%\bibitem{ABC} {\sc A. Ambrosetti, H. Br\'ezis and G. Cerami}, \emph{Combined effects of concave and convex nonlinearities in some elliptic problems}, J. Func. Anal. 122 (1994), 519-543.

\bibitem{BM} {\sc M. Bocea and M. Mih\u{a}ilescu},
\emph{ Eigenvalue problems in Orlicz-Sobolev spaces for rapidly growing operators in divergence form}, J. Diff. Equations 256 (2014), 640-657.

\bibitem{BBR}{\sc G. Bonanno, G.M. Bisci and V. Radulescu,}{\it \, Quasilinear elliptic non-homogeneous Dirichlet problems through Orlicz-Sobolev spaces,} Nonl. Anal. 75 (2012), 4441-4456.


\bibitem{BBR2}{\sc G. Bonanno, G. M. Bisci and V. Radulescu,}{\it \, Arbitrarily small weak solutions for a nonlinear eigenvalue problem in Orlicz-Sobolev spaces,} Monatshefte f\"ur Mathematik 165 (2012), 305-318.



\bibitem{JVMLED} {\sc M. L. M Carvalho, J. V. Goncalves and E. D. da Silva}, \emph{On quasilinear elliptic problems without the Ambrosetti-Rabinowitz condition}, J. Math. Anal.  Appl. 426 (2015), 466-483.

\bibitem{MSGC} {\sc M.L.M. Carvalho,  E.D. Silva, J. V.A. Gon\c calves and C. Goulart}, \emph{ Critical elliptic problems using Concave-concave nonlinearities}, Ann. Mat. Pura Appl. (2019), 693-726. 

\bibitem{Cerny} {\sc R. Cern\'y,}{\it \, Generalized Moser-Trudinger inequality for unbounded domains and its application }, Nonlinear Differ. Equ. Appl. DOI 10.1007/s00030-011-0143-0.


\bibitem{VGMS}{\sc Ph. Cl\'ement, M. Garcia-Huidobro, R. Man\'asevich and K. Schmitt,}{\it \, Mountain pass type solutions for quasilinear elliptic equations,} Calc. Var. 11 (2000), 33-62.

\bibitem{chung} {\sc N. T. Chung and H. Q. Toan}, \emph{ On a nonlinear and non-homogeneous problem without (A-R) type condition in Orlicz-Sobolev spaces}, Appl. Math. Comp. 219 (2013), 7820-7829.




\bibitem{EGS} {\sc E. D. da Silva,  J. V.A. Gon\c calves and K. O. Silva}, \emph{ On strongly nonlinear eigenvalue problems in the framework on nonreflexive Orlicz-Sobolev spaces}, arXiv 1610.02662v1.

\bibitem{DMKV} {\sc E.D. da Silva, M. L. M. Carvalho, K. Silva and J. V.A. Gon\c calves}, \emph{Quasilinear elliptic problems on non-reflexive Orlicz-Sobolev spaces}, Topol. Methods Nonlinear Anal. 54 (2019), 587–612

%\bibitem {DeM} {\sc G. Dal Maso and F. Murat}, {\emph Almost everywhere convergence of gradients of solutions to nonlinear elliptic systems}, Nonlinear Anal. 31 (1998),
% 405-412.
	
	
\bibitem{Db}E. DiBenedetto, \emph{$C^{1, \gamma}$ local regularity of weak solutions of degenerate
elliptic equations}, Nonlinear Anal. 7, no. 8, (1985) 827-850.
	
\bibitem{donaldson}{\sc T. Donaldson}, \emph{Nonlinear elliptic boundary value problems in Orlicz- Sobolev spaces}, J. Diff. Equations 10  (1971), 507-528.

%\bibitem{ET}{\sc I. Ekeland and R. Temam}, \emph{ Convex Analysis and Variational Problems}, North
%Holland, American Elsevier, New York, 1976.

\bibitem{FN} {\sc N. Fukagai, M. Ito  and K. Narukawa}, \emph{Positive solutions of quasilinear elliptic equations with critical Orlicz-Sobolev nonlinearity on $\mathbb{R}^{N}$}, Funkcial. Ekvac. 49 (2006), 235-267.

\bibitem{FN2} {\sc N. Fukagai and K. Narukawa,} \emph{On the existence of multiple positive solutions of quasilinear
elliptic eigenvalue problems}, Ann. Mat. Pura Appl. 186, no. 3, (2007) 539-564.

\bibitem{Fuchs1}{\sc M. Fuchs and G. Li,} {\it Variational inequalities for energy functionals with nonstandard growth conditions}, Abstr. Appl. Anal. 3 (1998), 405-412.

\bibitem{Fuchs2}{\sc M. Fuchs and V. Osmolovski,} {\it Variational integrals on Orlicz Sobolev spaces. Z.} Anal. Anwendungen 17, 393-415 (1998) 6.

\bibitem{GKMS} {\sc M. Garc\'ia-Huidobro, L. V. Khoi, R. Man\'asevich and K. Schmitt}, \emph{On principal eigenvalues for quasilinear elliptic differential operators:
	an Orlicz-Sobolev space setting,} Nonlinear Differ. Equat. Appl. 6 (1999), 207-
225.


\bibitem{gossez} {\sc J.P. Gossez}, \emph{Orlicz-Sobolev spaces and nonlinear elliptic boundary value problems.} Nonlinear Analysis, Function Spaces and Applications. Leipzig: BSB B. G. Teubner Verlagsgesellschaft (1979), 59-94. $<$http://eudml.org/doc/220389$>$.

\bibitem{gossez2} {\sc J.P. Gossez},  \emph{Nonlineare Elliptic boundary value problems for equations with rapidly(or slowly) increasing coefficients}, Trans. Amer. Math. Soc. 190  (1974), 163-205.


%\bibitem {jeanjean} {\sc L. Jeanjean}, On the existence of bounded Palais-Smale sequences and application
% to a Landesman-Lazer-type problem set on $\mathbb{R}^N$, Proc. R. Soc. Edinb., Sect. A, Math., 129, (1999), 787-809.

\bibitem{Kavian} {\sc O. Kavian}, \emph{Introduction a la Theorie Des Points Critiques: Et Applications Aux Problemes Elliptiques}, Springer, Heildelberg 1993. 


\bibitem{Lions}{\sc P.L. Lions,}{\it \, The concentration-compactness principle in the calculus of variations. The locally
	compact case. Part II,} Ann. Inst. H. Poincar\'e Anal. Non Lin\'eaire 1 (1984), 223-283.


\bibitem{LK} {\sc V.K. Le and K. Schmitt,}{\it Quasilinear elliptic equations and inequalities with rapidly growing coefficients } J. London Math. Soc. 62 (2000) 852-872

\bibitem{MR1} {\sc M. Mihailescu and V. R\u{a}dulescu}, \emph{Nonhomogeneous Neumann problems in Orlicz-Sobolev spaces}, C.R. Acad. Sci. Paris, Ser. I 346 (2008), 401-406.

\bibitem{MR2} {\sc M. Mihailescu and V. R\u{a}dulescu}, \emph{Existence and multiplicity of solutions for a quasilinear non- homogeneous problems: An Orlicz-Sobolev space setting}, J. Math. Anal. Appl. 330 (2007), 416-432.

\bibitem{MRep} {\sc M. Mihailescu and D. Repov\v{s}}, \emph{Multiple solutions for a nonlinear and non-homogeneous problem in Orlicz-Sobolev spaces},  Appl. Math. Comput. 217 (2011), 6624-6632.


\bibitem{MD}{\sc M. Mihailescu and D. Repovs,}{\it \, Multiple solutions for a nonlinear and non-homogeneous problems in Orlicz-Sobolev spaces,} Appl. Math. Comput. 217 (2011), 6624-6632.

\bibitem{MRR}{\sc M. Mihailescu, V. Radulescu and D. Repovs,}{\it \, On a non-homogeneous eigenvalue problem involving a potential: an Orlicz-Sobolev space setting,} J. Math. Pures Appliqu\'ees 93 (2010), 132-148.

\bibitem{MT} {\sc  V. Mustonen and M. Tienari},  \emph{An eigenvalue problem for generalized Laplacian in Orlicz-Sobolev spaces}, Proc. R.
Soc. Edinburgh, 129A (1999), 153-163.


\bibitem{MugnaiPapageorgiou} {\sc D. Mugnai and N. S. Papageorgiou}, \emph{Wang's multiplicity result for superlinear $(p,q)$-equations without the Ambrosetti-Rabinowitz condition}, Trans. Amer. Math. Soc. 366 (2014), 4919-4937.

\bibitem{O} {\sc W. Orlicz,}{\it \, \"{U}ber konjugierte Exponentenfolgen}, Studia Math. 3 (1931), 200-211

\bibitem{MR3} {\sc V. R\u{a}dulescu and D. Repov\v{s}}, \emph{Partial Differential Equations with Variable Exponents}, Variational methods and qualitative analysis, Monographs and Research Notes in Mathematics. CRC Press, Boca Raton, FL, (2015).


\bibitem{RR}{\sc M.N. Rao and Z.D. Ren}, \emph{Theory of Orlicz Spaces, Marcel Dekker}, New York (1985).

\bibitem{fang} {\sc Z. Tan and F. Fang}, \emph{Orlicz-Sobolev versus H\"older local minimizer and
	multiplicity results for quasilinear elliptic equations}, J. Math. Anal. Appl. 402 (2013), 348-370.

\bibitem{Szulkin} {\sc A. Szulkin}, \emph{Minimax principle for lower semicontinuous functions and applications to nonlinear boundary value problems}, Ann. Inst. H. Poincar\'e  3 (1986), 77-109.



\end{thebibliography}
\end{document}